\theoremstyle{definition}
\newtheorem{definition}{Definition}[section]
\newtheorem{remark}[definition]{Remark}
\theoremstyle{plain}
\newtheorem{theorem}[definition]{Theorem}
\newtheorem{lemma}[definition]{Lemma}
\newtheorem{proposition}[definition]{Proposition}
\newtheorem{corollary}[definition]{Corollary}
\numberwithin{equation}{section}
\newcommand{\BI}{\mathbb{BI}}
\newcommand{\KL}{\mathbb{KL}}
\newcommand{\BZ}{\mathbb{BZL}}
\newcommand{\AOL}{\mathbb{AOL}}
\newcommand{\PBZ}{PBZ$^{\ast }$}
\def\N{{\mathbb N}}
\def\I{{\mathbb I}}
\def\V{{\mathbb V}}
\def\W{{\mathbb W}}
\def\S{\mathcal{S}}
\def\B{\mathrm{B}}
\begin{document}
\title{A Note on Congruences of Infinite Bounded Involution Lattices}
\author{Claudia MURE\c SAN\thanks{Dedicated to the memory of my beloved grandmother, Elena Mircea}\\ 
{\small University of Cagliari, University of Bucharest}\\ 
{\small c.muresan@yahoo.com, cmuresan@fmi.unibuc.ro}}
\date{\today }
\maketitle

\begin{abstract} We prove that an infinite (bounded) involution lattice and even pseudo--Kleene algebra can have any number of congruences between $2$ and its number of elements or equalling its number of subsets, regardless of whether it has as many ideals as elements or as many ideals as subsets; consequently, the same holds for antiortholattices. Under the Generalized Continuum Hypothesis, this means that an infinite (bounded) involution lattice, pseudo--Kleene algebra or antiortholattice can have any number of congruences between $2$ and its number of subsets, regardless of its number of ideals.

{\em Keywords:} (bounded) involution lattice, (pseudo--)Kleene algebra, antiortholattice, (ordinal, horizontal) sum, congruence.

{\em $2010$ Mathematics Subject Classification:} primary: 06B10; secondary: 06F99, 06D30.\end{abstract}

\section{Introduction}
\label{introduction}

As part of our main result from \cite{gccm}, we have proven that, under the Generalized Continuum Hypothesis, an infinite lattice can have any number of congruences between $2$ and its number of subsets. In this paper, we prove that the same holds for infinite (bounded) involution lattices and even infinite pseudo--Kleene algebras, thus also for infinite antiortholattices, which are algebraic structures with pseudo--Kleene algebra reducts originating in the study of quantum logics \cite{GLP,PBZ2,rgcmfp,pbzsums,eucardbi}; moreover, we can let these algebras with lattice reducts have any numbers of ideals. Furthermore, if we restrict to cardinal numbers that are either smaller than the numbers of elements or equal to the numbers of subsets of these algebras, then we do not need to enforce the Continuum Hypothesis.

Note that, in the finite case, this result on numbers of congruences does not hold, due to the limited number of configurations. The finite case for lattices  has been treated in \cite{gcze,kumu}, the one for semilattices in \cite{gcz}, and the one for involution lattices, pseudo--Kleene algebras and antiortholattices in \cite{eucardbi}.

\section{Preliminaries}
\label{preliminaries}

We denote by $\N $ the set of the natural numbers and by $\N ^*=\N \setminus \{0\}$. $\amalg $ denotes the disjoint union. For any set $M$, we denote by $|M|$ the cardinality of $M$, by ${\cal P}(M)$ the set of the subsets of $M$ and, if $M$ is nonempty, by ${\rm Part}(M)$ and $({\rm Eq}(M),\vee ,\cap ,\Delta _M,\nabla _M)$ the bounded lattices of the partitions and the equivalences of $M$, respectively, and by $eq:{\rm Part}(M)\rightarrow {\rm Eq}(M)$ the canonical lattice isomorphism; for any finite partition $\{M_1,\ldots ,M_n\}$, $eq(\{M_1,\ldots ,M_n\})$ will be streamlined to $eq(M_1,\ldots ,M_n)$.

All algebras will be designated by their underlying sets. By {\em trivial algebra} we mean one--element algebra, and by {\em simple algebra} we mean algebra with at most two congruences. For any $n\in \N ^*$, ${\cal L}_n$ will denote the $n$--element chain. Let $L$ be a (bounded) lattice. Then the dual of $L$ will be denoted by $L^d$. The sets of the filters, principal filters, prime filters, ideals, principal ideals and prime ideals of $L$ will be denoted by ${\rm Filt}(L)$, ${\rm PFilt}(L)$, ${\rm Spec}_{\rm Filt}(L)$, ${\rm Id}(L)$, ${\rm PId}(L)$ and ${\rm Spec}_{\rm Id}(L)$, respectively. Recall that the prime ideals of $L$ are exactly the set complements of its prime filters and that, if $L$ is a chain, then all its proper filters are prime and the same goes for its ideals, hence the proper ideals of $L$ are exactly the set complements of its proper filters, in particular $|{\rm Filt}(L)|=|{\rm Id}(L)|$. For any $X\subseteq L$ and any $a,b\in L$, we denote by $[X)_L$ and $[a)_L$ the filter of $L$ generated by $X$ and by $a$, respectively, by $(X]_L$ and $(a]_L$ the ideal of $L$ generated by $X$ and by $a$, respectively, and by $[a,b]_L=[a)_L\cap (b]_L$.

Let $\V $ be a variety of algebras of a similarity type $\tau $ and $A$ and $B$ be algebras with reducts belonging to $\V $. Following \cite{rgcmfp,eucardbi}, we denote by $A\cong _{\V }B$ the fact that the $\tau $--reducts of $A$ and $B$ are isomorphic, and by ${\rm Con}_{\V }(A)$ and $\S _{\V }(A)$ the sets of the congruences and the subalgebras of the $\tau $--reduct of $A$, respectively. Recall that ${\rm Con}_{\V }(A)$ is a complete sublattice of ${\rm Eq}(A)$ {\rm \cite[Corollary 2, p. 51]{gralgu}, from which it follows that, if $\sigma $ is a similarity type of reducts of $\tau $--algebras and $\W $ is a variety of algebras of type $\sigma $, then ${\rm Con}_{\V }(A)$ is a complete bounded sublattice of ${\rm Con}_{\W }(A)$. Obviously, for any $\theta \in {\rm Con}_{\V }(A)$ and any $S\in \S _{\V }(A)$, we have $\theta \cap S^2\in {\rm Con}_{\V }(S)$. If $n\in \N ^*$ and $\tau $ contains constants $\kappa _1,\ldots ,\kappa _n$, then we denote by ${\rm Con}_{\V \kappa _1\ldots \kappa _n}(A)=\{\theta \in {\rm Con}_{\V }(A)\ |\ \kappa _1^A/\theta =\{\kappa _1^A\},\ldots ,\kappa _n^A/\theta =\{\kappa _n^A\}\}$, which is easily seen to be a complete sublattice of ${\rm Con}_{\V }(A)$ and thus a bounded lattice. If $\V $ is the variety of lattices or that of bounded lattices, then we eliminate the index $_{\V }$ from the previous notations.

\section{Lattices with Involutions and Some Constructions of Lattices}
\label{thealg}

\begin{definition} We call a {\em lattice with involution} or {\em involution lattice} (in brief, {\em i--lattice}) an algebra $(L,\vee ,\wedge ,\cdot ^{\prime })$ of type $(2,2,1)$, where $(L,\vee ,\wedge )$ is a lattice and $\cdot ^{\prime }$ is an order--reversing operation such that $a^{\prime \prime }=a$ for all $a\in L$, called {\em involution}.

A {\em bounded involution lattice} (in brief, {\em bi--lattice}) is an algebra $(L,\vee ,\wedge ,\cdot ^{\prime },0,1)$ of type $(2,2,1,0,0)$, where $(L,\vee ,\wedge ,0,1)$ is a bounded lattice and $(L,\vee ,\wedge ,\cdot ^{\prime })$ is an i--lattice. 

Distributive bi--lattices are called {\em De Morgan algebras}.

We consider the following condition on a bi--lattice $L$:

\begin{tabular}{ll}
\textcircled{k} & for all $a,b\in L$, $a\wedge a^{\prime }\leq b\vee b^{\prime }$\end{tabular}

A {\em pseudo--Kleene algebra} is a bi--lattice that satisfies condition \textcircled{k}. The involution of a pseudo--Kleene algebra is called {\em Kleene complement}.

Distributive pseudo--Kleene algebras are called {\em Kleene algebras} or {\em Kleene lattices}.

A bi--lattice $L$ is said to be {\em paraorthomodular} iff, for all $a,b\in L$, if $a\leq b$ and $a^{\prime }\wedge b=0$, then $a=b$.\label{bilat}\end{definition}

We will denote by $\I $, $\BI $ and $\KL $ the variety of involution lattices, bounded involution lattices and pseudo--Kleene algebras, respectively. An i--lattice with underlying set $L$ and involution $\cdot ^{\prime }$ will often be designated by $(L,\cdot ^{\prime })$. Unless specified otherwise, the involution of an i--lattice will be denoted $\cdot ^{\prime }$. Obviously, the involution of any i--lattice $L$ is a dual lattice automorphism of $L$, hence $L$ is self--dual and thus it has $|{\rm Filt}(L)|=|{\rm Id}(L)|$.

Of course, any Boolean algebra $A$ is a Kleene lattice, with the involution equalling its Boolean complement, which is preserved by all its lattice congruences, so that ${\rm Con}_{\I }(A)={\rm Con}(A)$. Remember that Boolean algebras are exactly the distributive orthomodular lattices. Furthermore, any orthomodular lattice $L$ is a paraorthomodular pseudo--Kleene algebra with all its lattice congruences preserving its involution, so that ${\rm Con}_{\I }(L)={\rm Con}(L)$ \cite{bruhar}.

\begin{definition}{\rm \cite{GLP,PBZ2,rgcmfp,pbzsums}} A {\em Brouwer--Zadeh lattice} (in brief, {\em BZ--lattice}) is an algebra $(L,\vee ,\wedge ,\cdot ^{\prime },\cdot ^{\sim },0,1)$ of type $(2,2,1,1,0,0)$ such that $(L,\vee ,\wedge ,\cdot ^{\prime },0,1)$ is a pseudo--Kleene algebra and the unary operation $\cdot ^{\sim }$, called {\em Brouwer complement}, is order--reversing and satisfies $a\wedge a^{\sim }=0$ and $a\leq a^{\sim \sim }=a^{\sim \prime }$ for all $a\in L$.

The Brouwer complement on a BZ--lattice $L$ defined by $0^{\sim }=1$ and $a^{\sim }=0$ for all $a\in L\setminus \{0\}$ is called the {\em trivial Brouwer complement}.

A {\em \PBZ --lattice} is a paraorthomodular BZ--lattice $L$ that satisfies the following condition, for all $a\in L$: $(a\wedge a^{\prime })^{\sim }=a^{\sim }\vee a^{\prime \sim }$.

An {\em antiortholattice} is a \PBZ --lattice with the property that $0$ and $1$ are its only elements whose Kleene complements are bounded lattice complements.\end{definition}

We denote by $\BZ $ the variety of BZ--lattices. \PBZ --lattices form a variety, as well. However, antiortholattices form a proper universal class, denoted by $\AOL $. Antiortholattices are exactly the \PBZ --lattices whose Brouwer complement is trivial. See \cite{GLP,PBZ2,rgcmfp,pbzsums} for these properties.

We now recall the definition of the horizontal sum of a  family of nontrivial bounded lattices, obtained by glueing those lattices at their bottom elements and at their top elements. Let $(L_i,\leq ^{L_i},0^{L_i},1^{L_i})_{i\in I}$ be a nonempty family of nontrivial bounded lattices. Then the {\em horizontal sum} of the family $(L_i,\leq ^{L_i},0^{L_i},1^{L_i})_{i\in I}$ is the bounded lattice $(\boxplus _{i\in I}L_i,\leq ,0,1)$ defined as follows: let $\displaystyle L=\amalg _{i\in I}L_i$ and $\varepsilon $ the equivalence on $L$ that collapses only the bottom elements of these lattices, as well as their top elements: $\varepsilon =eq(\{\{0^{L_i}\ |\ i\in I\},\{1^{L_i}\ |\ i\in I\}\}\cup \{\{x\}\ |\ x\in L\setminus \{0^{L_i},1^{L_i}\ |\ i\in I\}\})\in {\rm Eq}(L)$; denote by $0=0^{L_i}/\varepsilon $ and $1=1^{L_i}/\varepsilon $ for some $i\in I$; then, for every $i\in I$, $\varepsilon \cap L_i^2=\Delta _{L_i}\in {\rm Con}(L_i)$, so $L_i\cong L_i/\varepsilon $; we identify each $L_i$ with $L_i/\varepsilon $, by identifying $x$ with $x/\varepsilon $ for all $x\in L$, thus obtaining $0=0^{L_i}$ and $1=1^{L_i}$ for all $i\in I$; now we set $\boxplus _{i\in I}L_i=L/\varepsilon $ and $\displaystyle \leq =\bigcup _{i\in I}\leq ^{L_i}$. In particular, we denote by ${\cal M}_{|I|}=\boxplus _{i\in I}{\cal L}_3$ the modular lattice of length $3$ and cardinality $|I|+2$, which is clearly simple. If $(L_i,\cdot ^{\prime i})_{i\in I}$ is a nonempty family of nontrivial bi--lattices, then the {\em horizontal sum} of this family is the bi--lattice $(\boxplus _{i\in I}L_i,\cdot ^{\prime })$, whose underlying bounded lattice is the horizontal sum of the family of the bounded lattice reducts $(L_i)_{i\in I}$ and whose involution is defined by: $\cdot ^{\prime }\mid _{L_i}=\cdot ^{\prime i}$ for all $i\in I$.

Let $(L,\leq ^L)$ be a lattice with top element $1^L$ and $(M,\leq ^M)$ a lattice with bottom element $0^M$. Recall that the {\em ordinal sum} of $L$ with $M$ is the lattice $(L\oplus M,\leq )$ obtained by glueing the top element of $L$ and the bottom element of $M$ together, thus stacking $M$ on top of $L$. More precisely, if we denote by $\varepsilon $ the equivalence on $L\amalg M$ that only collapses $1^L$ with $0^M$: $\varepsilon =eq(\{\{1^L,0^M\}\}\}\cup \{\{x\}\ |\ x\in L\amalg M\setminus \{1^L,0^M\}\})\in {\rm Eq}(L\amalg M)$, then, noting that $\varepsilon \cap L^2=\Delta _L\in {\rm Con}(L)$ and $\varepsilon \cap M^2=\Delta _M\in {\rm Con}(M)$, we identify $L$ with $L/\varepsilon \cong L$ and $M$ with $M/\varepsilon \cong M$ by identifying each $x\in L\amalg M$ with $x/\varepsilon $; now we let $L\oplus M=(L\amalg M)/\varepsilon $ and $\displaystyle \leq =\leq ^L\cup \leq ^M\cup \{(x,y)\ |\ x\in L,y\in M\}$.

If, for every $\alpha \in {\rm Con}(L)$ and every $\beta \in {\rm Con}(M)$, we denote by $\alpha \oplus \beta $ the equivalence on $L\oplus M$ whose classes are those of the equivalences $\alpha $ and $\beta $, excepting $1^L/\alpha $ and $0^M/\beta $, along with the union of the classes $1^L/\alpha $ and $1^L/\beta =0^M/\beta $: $\alpha \oplus \beta =eq((L/\alpha \setminus 1^L/\alpha )\cup (M/\beta \setminus 0^M/\beta )\cup \{1^L/\alpha \cup 0^M/\beta \})$, then, clearly, $\alpha \oplus \beta \in {\rm Con}(L\oplus M)$. Furthermore, since $L$ and $M$ are sublattices of $L\oplus M$, for every $\theta \in {\rm Con}(L\oplus M)$, we have $\theta \cap L^2\in {\rm Con}(L)$ and $\theta \cap M^2\in {\rm Con}(M)$, and clearly $\theta =(\theta \cap L^2)\oplus (\theta \cap M^2)$. Therefore the map $(\alpha ,\beta )\mapsto \alpha \oplus \beta $ is a lattice isomorphism from ${\rm Con}(L)\times {\rm Con}(M)$ to ${\rm Con}(L\oplus M)$.

Clearly, the ordinal sum of bounded lattices is associative and so is the operation $\oplus $ on congruences of those bounded lattices.

Let us also note that ${\rm Filt}(L\oplus M)={\rm Filt}(M)\cup \{F\cup L\ |\ F\in {\rm Filt}(L)\}$ and ${\rm Id}(L\oplus M)={\rm Id}(L)\cup \{L\cup I\ |\ I\in {\rm Id}(M)\}$, thus $|{\rm Filt}(L\oplus M)|=|{\rm Filt}(L)|+|{\rm Filt}(M)|-1$ and $|{\rm Id}(L\oplus M)|=|{\rm Id}(L)|+|{\rm Id}(M)|-1$, where we let $\kappa -1=\kappa $ for any infinite cardinal number $\kappa $.

Let $L$ be a lattice with top element, $f:L\rightarrow L^d$ a dual lattice isomorphism and $(K,\cdot ^{\prime K})$ a bi--lattice. Then $L\oplus K\oplus L^d$, and in particular $L\oplus L^d$ in the case when $K$ is the one--element chain, becomes an i--lattice with the involution $\cdot ^{\prime }:L\oplus K\oplus L^d\rightarrow L\oplus K\oplus L^d$ defined by: $\cdot ^{\prime }\mid _L=f$, $\cdot ^{\prime }\mid _K=\cdot ^{\prime K}$ and $\cdot ^{\prime }\mid _{L^d}=f^{-1}$. Notice that, if $K$ is a pseudo--Kleene algebra, then $L\oplus K\oplus L^d$ satisfies \textcircled{k}, thus $L\oplus K\oplus L^d$ is a pseudo--Kleene algebra if $L$ is a bounded lattice. In particular, $L\oplus L^d$ is a pseudo--Kleene algebra for any bounded lattice $L$.

For any i--lattice $(A,\cdot ^{\prime })$, if we denote by $U^{\prime }=\{(a^{\prime },b^{\prime })\ |\ (a,b)\in U\}$ for all $U\subseteq A^2$, then we clearly have ${\rm Con}_{\I }(A)=\{\theta \in {\rm Con}(A)\ |\ \theta =\theta ^{\prime }\}$, from which it is immediate that, with the notations above, ${\rm Con}_{\I }(L\oplus K\oplus L^d)=\{\alpha \oplus \beta \oplus \alpha ^{\prime }\ |\ \alpha \in {\rm Con}(L),\beta \in {\rm Con}_{\I }(K)\}\cong {\rm Con}(L)\times {\rm Con}_{\I }(K)$, in particular ${\rm Con}_{\I }(L\oplus L^d)=\{\alpha \oplus \alpha ^{\prime }\ |\ \alpha \in {\rm Con}(L)\}\cong {\rm Con}(L)$; see also \cite{rgcmfp,kumu,eunoucard,eucardbi}. Note, also, that, for any bi--lattice $A$, ${\rm Con}_{\I 01}(A)={\rm Con}_{\I 0}(A)$.

It is straightforward that, if $L$ is a non--trivial bounded lattice and $K\in \KL $, then the pseudo--Kleene algebra $L\oplus K\oplus L^d$ becomes an antiortholattice when endowed with the trivial Brouwer complement \cite{pbzsums}. It is routine to prove that, for any antiortholattice $A$, ${\rm Con}_{\BZ }(A)={\rm Con}_{\BZ 0}(A)\cup \{\nabla _A\}={\rm Con}_{\I 0}(A)\cup \{\nabla _A\}\cong {\rm Con}_{\I 0}(A)\oplus {\cal L}_2$; see also \cite{PBZ2,pbzsums}. Therefore, if $L$ is a non--trivial bounded lattice and $K\in \KL $, then ${\rm Con}_{\BZ }(L\oplus K\oplus L)={\rm Con}_{\I 0}(L\oplus K\oplus L)\cup \{\nabla _{L\oplus K\oplus L}\}=\{\alpha \oplus \beta \oplus \alpha ^{\prime}\ |\ \alpha \in {\rm Con}_0(L),\beta \in {\rm Con}_{\I }(K)\}\}\cup \{\nabla _{L\oplus K\oplus L}\}\cong ({\rm Con}_0(L)\times {\rm Con}_{\I }(K))\oplus {\cal L}_2$, thus, if $L$ is $0$--regular, so that ${\rm Con}_0(L)=\{\Delta _L\}$, then ${\rm Con}_{\BZ }(L\oplus K\oplus L)=\{\Delta _L\oplus \beta \oplus \Delta _L\ |\ \beta \in {\rm Con}_{\I }(K)\}\cup \{\nabla _{L\oplus K\oplus L}\}=\{eq(K/\beta \cup \{\{0\},\{1\}\})\ |\ \beta \in {\rm Con}_{\I }(K)\}\cup \{\nabla _{L\oplus K\oplus L}\}\cong {\rm Con}_{\I }(K)\oplus {\cal L}_2$, thus $|{\rm Con}_{\BZ }(L\oplus K\oplus L)|=|{\rm Con}_{\I }(K)|+1$; in particular, ${\rm Con}_{\BZ }({\cal L}_2\oplus K\oplus {\cal L}_2)=\{eq(K/\beta \cup \{\{0\},\{1\}\})\ |\ \beta \in {\rm Con}_{\I }(K)\}\cup \{\nabla _{{\cal L}_2\oplus K\oplus {\cal L}_2}\}\cong {\rm Con}_{\I }(K)\oplus {\cal L}_2$, thus $|{\rm Con}_{\BZ }({\cal L}_2\oplus K\oplus {\cal L}_2)|=|{\rm Con}_{\I }(K)|+1$, and, for any $0$--regular nontrivial bounded lattice $L$, the antiortholattice $L\oplus L^d$ is simple; see also \cite{pbzsums,eucardbi}.

See in \cite{pbzsums,eunoucard,eucardbi} the congruences of any horizontal sum of nontrivial bounded (involution) lattices. Now let us look at the particular case of the horizontal sum of a bounded (involution) lattice $L$ having $|L|>2$ with the four--element Boolean algebra or with two copies of the three--element chain; the following hold for any of these two possible definitions of the involution on the lattice ${\cal L}_2^2$, whose incomparable elements we will denote by $a$ and $b$. For any $\theta \in {\rm Con}(L\boxplus {\cal L}_2^2)\setminus \{\nabla _{L\boxplus {\cal L}_2^2}\}$ and any $x\in L\setminus \{0,1\}$, the simple lattice $S=\{0,a,x,b,1\}\cong {\cal M}_3$ is a sublattice of $L\boxplus {\cal L}_2^2$, so $\theta \cap S^2\in {\rm Con}(S)$, hence $\theta \cap S^2=\Delta _S$ since $(0,1)\notin \theta $, thus neither of the elements $a$, $x$, $b$ belongs to $0/\theta $ or $1/\theta $. On the other hand, for any $\alpha \in {\rm Con}_{01}(L)$, the equivalence $eq(L/\alpha \cup \{\{a\},\{b\}\})$ obtained by putting the classes of $\alpha $ together with those of the only member $\Delta _{{\cal L}_2^2}$ of ${\rm Con}_{01}({\cal L}_2^2)$ is a lattice congruence of $L\boxplus {\cal L}_2^2$. These properties and the fact that $L$ and ${\cal L}_2^2$ are subalgebras of $L\boxplus {\cal L}_2^2$, so that, for any (involution--preserving) lattice congruence $\theta $ of $L\boxplus {\cal L}_2^2$, $\theta \cap L^2$ and $\theta \cap {\cal L}_2^2$ are (involution--preserving) lattice congruences of $L$ and ${\cal L}_2^2$, respectively, show that ${\rm Con}(L\boxplus {\cal L}_2^2)={\rm Con}_{01}(L\boxplus {\cal L}_2^2)\cup \{\nabla _{L\boxplus {\cal L}_2^2}\}=\{eq(L/\alpha \cup \{\{a\},\{b\}\})\ |\ \alpha \in {\rm Con}_{01}(L)\}\cup \{\nabla _{L\boxplus {\cal L}_2^2}\}\cong {\rm Con}_{01}(L)\oplus {\cal L}_2$ and, when $L\in \BI $, ${\rm Con}_{\I }(L\boxplus {\cal L}_2^2)={\rm Con}_{\I 0}(L\boxplus {\cal L}_2^2)\cup \{\nabla _{L\boxplus {\cal L}_2^2}\}=\{eq(L/\alpha \cup \{\{a\},\{b\}\})\ |\ \alpha \in {\rm Con}_{\I 0}(L)\}\cup \{\nabla _{L\boxplus {\cal L}_2^2}\}\cong {\rm Con}_{\I 0}(L)\oplus {\cal L}_2$.

We will denote by $(\B (L),\leq )$ the bounded lattice obtained from a lattice $(L,\leq ^L)$ by adding a new top element $1$ and a new bottom element $0$: $\B (L)=L\amalg \{0\}\amalg \{1\}$ and $\leq =\leq ^L\cup \{(0,x),(x,1)\ |\ x\in \B (L)\}\setminus \{(1,0)\}$. We have $|\B (L)|=|L|+2$ and ${\rm Filt}(\B (L))=\{\{1\},\B (L)\}\cup \{F\cup \{1\}\ |\ F\in {\rm Filt}(L)\}$, ${\rm Id}(\B (L))=\{\{0\},\B (L)\}\cup \{I\cup \{0\}\ |\ I\in {\rm Id}(L)\}$, so that $|{\rm Filt}(\B (L))|=|{\rm Filt}(L)|+2$ and $|{\rm Id}(\B (L))|=|{\rm Id}(L)|+2$. Note that, in $\B (L)$, $0$ is meet--irreducible and $1$ is join--irreducible, from which it is immediate that $eq(\{0\},L,\{1\})\in {\rm Con}(\B (L))$ and, moreover, $eq(L/\theta \cup \{\{0\},\{1\}\})\in {\rm Con}(\B (L))$ for all $\theta \in {\rm Con}(L)$ and, since $L$ is a sublattice of $\B (L)$, for any $\alpha \in {\rm Con}(\B (L))$, we have $\alpha \cap L^2\in {\rm Con}(L)$, therefore ${\rm Con}_{01}(\B (L))=\{eq(L/\theta \cup \{\{0\},\{1\}\})\ |\ \theta \in {\rm Con}(L)\}$. If $(L,\cdot ^{\prime L})$ is an involution lattice, then $(\B (L),\cdot ^{\prime })$ is a bounded involution lattice with $\cdot ^{\prime }\mid _L=\cdot ^{\prime L}$, $0^{\prime }=1$ and $1^{\prime }=0$ and, by the above, ${\rm Con}_{\I 0}(\B (L))=\{eq(L/\theta \cup \{\{0\},\{1\}\})\ |\ \theta \in {\rm Con}_{\I }(L)\}$.

Now let us look at a construction we have used in \cite{gccm,eunoucard,eucardbi}: let $M$ be a lattice and let us consider the bounded lattice $L=\B (M)\boxplus {\cal L}_2^2$, with the incomparable elements of ${\cal L}_2^2$ denoted $a$ and $b$. Then $|L|=|M|+4$, ${\rm Filt}(L)={\rm Filt}(\B (M))\cup \{\{a,1\},\{b,1\}\}$ and ${\rm Id}(L)={\rm Id}(\B (M))\cup \{\{0,a\},\{0,b\}\}$, thus $|{\rm Filt}(L)|=|{\rm Filt}(M)|+4$ and $|{\rm Id}(L)|=|{\rm Id}(M)|+4$.

By the above, if $M$ is an i--lattice, then $\B (M)$ becomes a bi--lattice, hence $L$ can be organized as a bi--lattice either as the horizontal sum of bi--lattices $L=\B (M)\boxplus {\cal L}_2^2$ of the bi--lattice $\B (M)$ with the four--element Boolean algebra or as the horizontal sum of bi--lattices $L={\cal L}_3\boxplus \B (M)\boxplus {\cal L}_3$ of the bi--lattice $\B (M)$ with two copies of the three--element involution chain. The first of these horizontal sums of bi--lattices satisfies the property that, in the particular case when $M$ satisfies \textcircled{k} for all $a,b\in M$, which means that $M$ is a pseudo--Kleene algebra in the particular case when $M$ is a bi--lattice, then $L=\B (M)\boxplus {\cal L}_2^2$ is a pseudo--Kleene algebra. The following hold for either of these definitions of the involution of $L$:\vspace*{3pt}

\begin{center}\begin{tabular}{cc}\begin{picture}(40,40)(0,0)
\put(-70,35){$(\B (M)\boxplus {\cal L}_2^2,\cdot ^{\prime })$:}
\put(20,0){\circle*{3}}
\put(-10,20){\circle*{3}}
\put(50,20){\circle*{3}}
\put(20,40){\circle*{3}}
\put(18,-9){$0$}
\put(-18,18){$a$}
\put(53,17){$b=a^{\prime }$}
\put(18,43){$1$}
\put(20,20){\circle{22}}
\put(20,0){\line(-4,5){10}}
\put(20,0){\line(4,5){10}}
\put(15,17){$M$}
\put(20,40){\line(-4,-5){10}}
\put(20,40){\line(4,-5){10}}
\put(20,0){\line(-3,2){30}}
\put(20,0){\line(3,2){30}}
\put(20,40){\line(-3,-2){30}}
\put(20,40){\line(3,-2){30}}\end{picture}
&\hspace*{150pt}
\begin{picture}(40,40)(0,0)
\put(-92,35){$({\cal L}_3\boxplus \B (M)\boxplus {\cal L}_3,\cdot ^{\prime })$:}
\put(20,0){\circle*{3}}
\put(-10,20){\circle*{3}}
\put(50,20){\circle*{3}}
\put(20,40){\circle*{3}}
\put(18,-9){$0$}
\put(-38,18){$a=a^{\prime }$}
\put(53,17){$b=b^{\prime }$}
\put(18,43){$1$}
\put(20,20){\circle{22}}
\put(20,0){\line(-4,5){10}}
\put(20,0){\line(4,5){10}}
\put(15,17){$M$}
\put(20,40){\line(-4,-5){10}}
\put(20,40){\line(4,-5){10}}
\put(20,0){\line(-3,2){30}}
\put(20,0){\line(3,2){30}}
\put(20,40){\line(-3,-2){30}}
\put(20,40){\line(3,-2){30}}\end{picture}
\end{tabular}\end{center}

By the above, ${\rm Con}(L)=\{eq(M/\theta \cup \{\{0\},\{a\},\{b\},\{1\}\})\ |\ \theta \in {\rm Con}(M)\}\cup \{\nabla _L\}\cong {\rm Con}(M)\oplus {\cal L}_2$ and, if $M\in \I $, then ${\rm Con}_{\I }(L)=\{eq(M/\theta \cup \{\{0\},\{a\},\{b\},\{1\}\})\ |\ \theta \in {\rm Con}_{\I }(M)\}\cup \{\nabla _L\}\cong {\rm Con}_{\I }(M)\oplus {\cal L}_2$, so $|{\rm Con}(L)|=|{\rm Con}(M)|+1$ and, if $M\in \I $, then $|{\rm Con}_{\I }(L)|=|{\rm Con}_{\I }(M)|+1$.

\section{The Theorems}

Let $L$ be a lattice. Since the partitions of $L$ are among the subsets of ${\cal P}(L)$ of cardinality at most $|L|$, we have $|{\rm Con}(L)|\leq |{\rm Eq}(L)|=|{\rm Part}(L)|\leq (2^{|L|})^{|L|}=2^{|L|\cdot |L|}$, so that $|{\rm Con}(L)|\leq |{\rm Eq}(L)|\leq 2^{|L|}$ if $L$ is infinite. Actually, by \cite{gcze,free}, if $L$ is finite, then $|{\rm Con}(L)|\leq 2^{|L|-1}$, so $L$ has at most as many congruences as subsets regardless of whether it is infinite.

Concerning the numbers of filters and ideals of $L$, we have $|L|=|{\rm PFilt}(L)|=|{\rm PId}(L)|\leq |{\rm Filt}(L)|,|{\rm Id}(L)|\leq |{\cal P}(L)|=2^{|L|}$. Thus, under the Generalized Continuum Hypothesis, if $L$ is infinite, then $|{\rm Filt}(L)|,|{\rm Id}(L)|\in \{|L|,2^{|L|}\}$, and, if, for some infinite cardinal number $\nu $, $L$ has $\{|{\rm Filt}(L)|,|{\rm Id}(L)|\}=\{\nu ,2^{\nu }\}$, then $|L|=\nu $.

Throughout the rest of this paper, $\V $ will be an arbitrary variety of algebras of the same similarity type.

Let $(I,\leq )$ be an ordered set and $(A_{\mu })_{\mu \in I}$ a family of members of $\V $. Recall that $(A_{\mu })_{\mu \in I}$ is called a {\em directed system} of members of $\V $ iff it satisfies the following condition, stating that each $A_{\lambda }$ is a proper subalgebra of every $A_{\mu }$ with $\lambda <\mu $:

\begin{flushleft}\begin{tabular}{cl}
\textcircled{s}$_{\V }$ & for all $\lambda ,\mu \in I$ with $\lambda <\mu $, $A_{\lambda }\subseteq A_{\mu }$ and $A_{\lambda }\in \S _{\V }(A_{\mu })\setminus \{A_{\mu }\}$\end{tabular}\end{flushleft}

If $(A_{\mu })_{\mu \in I}$ is a directed system, then we can define the {\em directed union} of $(A_{\mu })_{\mu \in I}$ to be the member $A$ of $\V $ with $\displaystyle A=\bigcup _{\mu \in I}A_{\mu }$ and, for every $\star $ belonging to the signature of $\V $ and every $\mu \in I$, $\star ^A\mid _{A_{\mu }}=\star ^{A_{\mu }}$.

Note that, if $(A_{\mu })_{\mu \in I}$ is a directed system, then so is $(A_{\mu })_{\mu \in J}$ for every subset $J$ of $I$, thus, trivially, for each $\mu \in I$, $A_{\mu }$ is the directed union of the family $(A_{\lambda })_{\lambda \in I,\lambda \leq \mu }$.

The following condition states that each nontrivial congruence of every $A_{\mu }$ has an $A_{\lambda }$ with $\lambda <\mu $ as unique nonsingleton congruence class, and such a congruence of $A_{\mu }$ exists for every $\lambda <\mu $:

\begin{flushleft}\begin{tabular}{cl}
\textcircled{c}$_{\V }$ & for all $\mu \in I$, ${\rm Con}_{\V }(A_{\mu })=\{\Delta _{A_{\mu }},\nabla _{A_{\mu }}\}\cup \{eq(\{A_{\lambda }\}\cup \{\{x\}\ |\ x\in A_{\mu }\setminus A_{\lambda }\})\ |\ \lambda \in I,\lambda <\mu \}$\end{tabular}\end{flushleft}

If $\V $ is the variety of lattices or that of bounded lattices, then we denote the conditions \textcircled{s}$_{\V }$ and \textcircled{c}$_{\V }$, simply, by \textcircled{s} and \textcircled{c}, respectively.

For the calculations that follow in this section, note that an equivalent form of \textcircled{c}$_{\V }$ is: for all $\mu \in I$, ${\rm Con}_{\V }(A_{\mu })=\{\Delta _{A_{\mu }}\}\cup \{eq(\{A_{\lambda }\}\cup \{\{x\}\ |\ x\in A_{\mu }\setminus A_{\lambda }\})\ |\ \lambda \in I,\lambda \leq \mu \}$. Note, also, that a singleton family satisfies condition \textcircled{c}$_{\V }$ iff its member is a simple algebra from $\V $.

\begin{lemma}{\rm \cite[Lemma $3.2$]{gccm}} Let $\iota $ be a limit ordinal, $\sigma $ an ordinal with $\sigma <\iota $, $I=\{\mu \ |\ \sigma \leq \mu <\iota \}$, $(A_{\mu })_{\mu \in I}$ a family of members of $\V $ and $A_{\iota }$ the directed union of the family $(A_{\mu })_{\mu \in I}$. If the family $(A_{\mu })_{\mu \in I}$ satisfies conditions \textcircled{s}$_{\V }$ and \textcircled{c}$_{\V }$, then the family $(A_{\mu })_{\mu \in I\cup \{\iota \}}$ also satisfies conditions \textcircled{s}$_{\V }$ and \textcircled{c}$_{\V }$.\label{czedlis}\end{lemma}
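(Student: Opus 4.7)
The plan is to verify both conditions for the enlarged family $(A_\mu)_{\mu \in I\cup\{\iota\}}$. For every index $\mu \in I$ the two conditions are inherited from the original family unchanged, since any $\lambda<\mu$ in $I\cup\{\iota\}$ automatically lies in $I$ (as $\mu<\iota$); so only the instance $\mu=\iota$ is new. Condition \textcircled{s}$_\V$ at $\iota$ amounts to showing $A_\lambda\in\S_\V(A_\iota)\setminus\{A_\iota\}$ for every $\lambda\in I$: inclusion and the subalgebra property are built into the directed-union construction, and properness follows because $\iota$ is a limit ordinal, so some $\mu\in I$ has $\lambda<\mu$, and \textcircled{s}$_\V$ on the original family yields $A_\lambda\subsetneq A_\mu\subseteq A_\iota$.

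For condition \textcircled{c}$_\V$ at $\iota$, write $\Theta_\lambda:=eq(\{A_\lambda\}\cup\{\{x\}\ |\ x\in A_\iota\setminus A_\lambda\})$ for $\lambda\in I$; the goal is
\[
{\rm Con}_\V(A_\iota)=\{\Delta_{A_\iota},\nabla_{A_\iota}\}\cup\{\Theta_\lambda\ |\ \lambda\in I\}.
\]
For the ``$\supseteq$'' direction, given any fundamental operation $\star$ and tuples componentwise $\Theta_\lambda$-related, I pick $\mu\in I$ with $\lambda<\mu$ and with all components in $A_\mu$ (possible because $(A_\mu)_{\mu\in I}$ is an inclusion-chain by \textcircled{s}$_\V$ and only finitely many components appear); then $\Theta_\lambda\cap A_\mu^2$ is exactly the congruence of $A_\mu$ granted by \textcircled{c}$_\V$ for the pair $\lambda<\mu$, and the compatibility of $\star^{A_\iota}$ reduces to that of $\star^{A_\mu}$, since $A_\mu$ is a subalgebra of $A_\iota$.

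The reverse inclusion is the main work. Given $\theta\in{\rm Con}_\V(A_\iota)\setminus\{\Delta_{A_\iota},\nabla_{A_\iota}\}$, set $\theta_\mu:=\theta\cap A_\mu^2\in{\rm Con}_\V(A_\mu)$; by \textcircled{c}$_\V$ on the original family in its equivalent form, each $\theta_\mu$ is either $\Delta_{A_\mu}$ or $eq(\{A_{\lambda_\mu}\}\cup\{\{x\}\ |\ x\in A_\mu\setminus A_{\lambda_\mu}\})$ for a unique $\lambda_\mu\in I$ with $\sigma\le\lambda_\mu\le\mu$ (the case $\lambda_\mu=\mu$ encoding $\nabla_{A_\mu}$). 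Since $\theta\ne\Delta_{A_\iota}$, there is $\mu_0\in I$ with $\theta_{\mu_0}\ne\Delta_{A_{\mu_0}}$, and then $\theta_\mu\supseteq\theta_{\mu_0}\ne\Delta$ for every $\mu\ge\mu_0$, so $\lambda_\mu$ is defined. The crucial calculation is that for $\mu_0\le\mu<\mu'$, restricting $\theta_{\mu'}$ to $A_\mu^2$ returns $\theta_\mu$, whose nonsingleton class is $A_{\lambda_{\mu'}}\cap A_\mu$: this equals $A_{\lambda_{\mu'}}$ if $\lambda_{\mu'}\le\mu$ (forcing $\lambda_\mu=\lambda_{\mu'}$), and equals $A_\mu$ if $\lambda_{\mu'}>\mu$ (forcing $\lambda_\mu=\mu$, i.e.\ $\theta_\mu=\nabla_{A_\mu}$). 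Thus, as soon as $\lambda_{\mu_1}<\mu_1$ for some $\mu_1\ge\mu_0$, the rule pins $\lambda_{\mu'}=\lambda_{\mu_1}$ for every $\mu'\ge\mu_1$, whence $\theta=\bigcup_{\mu'\ge\mu_1}\theta_{\mu'}=\Theta_{\lambda_{\mu_1}}$; otherwise $\theta_\mu=\nabla_{A_\mu}$ for every $\mu\ge\mu_0$, so every pair of $A_\iota^2$ eventually lies in some $A_\mu^2\subseteq\theta$, yielding $\theta=\nabla_{A_\iota}$, contrary to the assumption. I expect the main obstacle to lie precisely in this stabilization step, which requires carefully discriminating the two branches of the dichotomy and tacitly uses nontriviality of each $A_\mu$ so that $A_{\lambda_{\mu'}}\cap A_\mu$ is correctly identified.
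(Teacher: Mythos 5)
The paper does not prove this lemma: it is imported verbatim from \cite[Lemma 3.2]{gccm}, so there is no in-paper argument to compare yours against. Your proof is correct and complete as a self-contained argument: the reduction of the new instances of \textcircled{s}$_{\V }$ and \textcircled{c}$_{\V }$ to the single index $\iota $, the verification that each $\Theta _{\lambda }$ is compatible with the operations by pushing any finite tuple into some $A_{\mu }$ with $\lambda <\mu $, and the stabilization dichotomy for $\theta \cap A_{\mu }^2$ all work. The only steps worth making explicit are (a) that $\lambda \mapsto A_{\lambda }$ is injective by \textcircled{s}$_{\V }$, which is what lets you recover $\lambda _{\mu }$ from the nonsingleton class $A_{\lambda _{\mu '}}\cap A_{\mu }$, and (b) that for $\mu \geq \mu _0$ the restriction $\theta _{\mu }$ is already known to differ from $\Delta _{A_{\mu }}$, which rules out the degenerate case where $A_{\lambda _{\mu '}}\cap A_{\mu }$ is a singleton; you flag both of these yourself, and they are routine to fill in.
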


\begin{lemma} Let $\tau $ be an ordinal, $I=\{\mu \ |\ 2\leq \mu \leq \tau \}$ and $(A_{\mu })_{\mu \in I}$ a family of members of $\V $ that satisfies conditions \textcircled{s}$_{\V }$ and \textcircled{c}$_{\V }$ and in which $A_2$ is a nontrivial algebra. Then:\begin{enumerate}
\item\label{elemncg1} Then $|{\rm Con}_{\V }(A_{\mu })|=|\mu |$ for all $\mu \in I$.
\item\label{elemncg2} Assume that $A_2$ is infinite and has $|A_2|=\nu \geq |\tau |$, that, for each ordinal $\mu \in I$ such that $\mu +1\in I$, we have $|A_{\mu +1}|=|A_{\mu }|+\kappa _{\mu }$ for some cardinal number $\kappa _{\mu }\leq \nu $, and that, for each limit ordinal $\iota \in I$, $A_{\iota }$ is the directed union of the family $(A_{\lambda })_{\lambda \in I,\lambda <\iota }$. Then $|A_{\lambda }|=\nu $ for each $\lambda \in I$.\end{enumerate}\label{elemncg}\end{lemma}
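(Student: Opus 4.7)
My overall plan is to prove part (i) by a direct count of the congruences listed in \textcircled{c}$_{\V}$, and part (ii) by transfinite induction on $\lambda \in I$.

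For part (i), I will use the equivalent form of \textcircled{c}$_{\V}$ recalled just before the lemma:
\[
{\rm Con}_{\V}(A_{\mu}) = \{\Delta_{A_{\mu}}\} \cup \{\theta_{\lambda}\ |\ \lambda \in I,\ \lambda \leq \mu\},
\]
where $\theta_{\lambda} = eq(\{A_{\lambda}\} \cup \{\{x\}\ |\ x \in A_{\mu} \setminus A_{\lambda}\})$. I will check that the displayed elements are pairwise distinct: for different $\lambda, \lambda' \in I$ with $\lambda, \lambda' \leq \mu$, the strict inclusions in \textcircled{s}$_{\V}$ give $A_{\lambda} \neq A_{\lambda'}$, so the unique nonsingleton classes of $\theta_{\lambda}$ and $\theta_{\lambda'}$ differ; and each $\theta_{\lambda}$ differs from $\Delta_{A_{\mu}}$ because $A_2 \subseteq A_{\lambda}$ forces $|A_{\lambda}| \geq 2$. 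Hence $|{\rm Con}_{\V}(A_{\mu})| = 1 + |\{\lambda \in I\ |\ \lambda \leq \mu\}|$, which equals $\mu$ for a finite ordinal $\mu \geq 2$ (the interval $\{2,\ldots,\mu\}$ has $\mu-1$ elements) and $|\mu|$ when $\mu$ is infinite; in either case $|{\rm Con}_{\V}(A_{\mu})| = |\mu|$.

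For part (ii), I will induct on $\lambda \in I$. The base case $\lambda = 2$ is the hypothesis $|A_2|=\nu$. For a successor $\lambda = \mu + 1 \in I$ with $|A_{\mu}| = \nu$ by induction, the successor assumption yields $|A_{\mu+1}| = |A_{\mu}| + \kappa_{\mu} = \nu + \kappa_{\mu} = \nu$, since $\kappa_{\mu} \leq \nu$ and $\nu$ is infinite. For a limit ordinal $\iota \in I$ with $|A_{\lambda}| = \nu$ for all $\lambda \in I$ with $\lambda < \iota$, the directed--union assumption gives $A_{\iota} = \bigcup_{\lambda \in I,\ \lambda < \iota} A_{\lambda}$, so $|A_{\iota}| \leq |\iota| \cdot \nu = \nu$ (using $|\iota| \leq |\tau| \leq \nu$ together with the infiniteness of $\nu$), while $A_2 \subseteq A_{\iota}$ gives $|A_{\iota}| \geq \nu$, and equality follows.

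I do not anticipate a serious obstacle; both parts reduce to elementary cardinal arithmetic once the congruence list from \textcircled{c}$_{\V}$ and the inductive setup are clearly laid out. The only mildly delicate points are handling the finite-ordinal case uniformly with the infinite one in part (i), and controlling the cardinality of the index set at the limit stages of part (ii) — which is precisely what the hypothesis $|\tau| \leq \nu$ is designed to secure.
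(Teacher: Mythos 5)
Your proposal is correct and follows essentially the same route as the paper: part (i) by counting the pairwise distinct congruences listed in \textcircled{c}$_{\V }$ (using the nontriviality of $A_2$ and the strict inclusions from \textcircled{s}$_{\V }$ to separate them), and part (ii) by transfinite induction with the same cardinal-arithmetic estimates at successor and limit stages. Your explicit handling of the finite-ordinal case in (i) is merely a more careful writing of the paper's computation $2+|\mu |-2=|\mu |$.
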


\begin{proof} (\ref{elemncg1}) Let $\mu \in I$. From the fact that $A_2$ is nontrivial and condition \textcircled{s}$_{\V }$, that ensures us, in particular, that $A_{\mu }$ is nontrivial, as well, we get that, for all $\eta ,\lambda \in I$ with $\eta ,\lambda <\mu $ and $\eta \neq \lambda $, the congruences $\Delta _{A_{\mu }}$, $eq(\{A_{\eta }\}\cup \{\{x\}\ |\ x\in A_{\mu }\setminus A_{\eta }\})$, $eq(\{A_{\lambda }\}\cup \{\{x\}\ |\ x\in A_{\mu }\setminus A_{\lambda }\})$ and $\nabla _{A_{\mu }}$ are pairwise distinct. Therefore $|{\rm Con}_{\V }(A_{\mu })|=2+|\mu |-2=|\mu |$.

\noindent (\ref{elemncg2}) We apply induction. By the hypothesis, $|A_2|=\nu $.

Now let $\iota \in I\setminus \{2\}$, which means that $\iota $ is an ordinal with $3\leq \iota \leq \tau $.

If $\iota $ is a successor ordinal, $\iota =\mu +1$ for a (unique) ordinal $\mu $ with $2\leq \mu <\tau $ and such that $|A_{\mu }|=\nu $, then  $|A_{\iota }|=|A_{\mu }|+\kappa _{\mu }=\nu +\kappa _{\mu }=\nu $ since $\nu \geq \kappa _{\mu }$.

If $\iota $ is a limit ordinal such that, for all ordinals $\mu $ with $2\leq \mu <\iota $, $|A_{\mu }|=\nu $, then $\displaystyle \nu =|A_2|\leq |A_{\iota }|\leq \sum _{2\leq \lambda <\iota }|A_{\mu }|=\sum _{2\leq \lambda <\iota }\nu \leq |\iota |\cdot \nu \leq |\tau |\cdot \nu =\nu $ since $\nu \geq |\tau |$.\end{proof}

\begin{lemma}{\rm \cite[Lemma $3.1$]{gccm}} Let $I$ be an ideal of a lattice $K$ such that $K$ satisfies the following condition:

\begin{tabular}{cl}
\textcircled{g}$_{I}$ & for all $(x_n)_{n\in \N }\subseteq K$, if $x_n>x_{n+1}$ for all $n\in \N $,\\
& then $x_n\in I$ for all but finitely many $n\in \N $.\end{tabular}

Then every nonprincipal filter of $K$ is generated by a filter of $I$.\label{alsoczedlis}\end{lemma}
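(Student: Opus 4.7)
The plan is to show that if $F$ is a nonprincipal filter of $K$, then $G:=F\cap I$ is itself a filter of the sublattice $I$, and that $G$ generates $F$ in $K$. Since $I$ is an ideal, it is closed under joins by definition and also under meets (for $x\in I$ and $y\in K$ we have $x\wedge y\leq x$, so $x\wedge y\in I$), so $I$ is a sublattice of $K$ and the phrase ``filter of $I$'' is meaningful. The intersection $G=F\cap I$ is automatically closed under meets and, whenever it is nonempty, upward--closed inside $I$; so $G$ is a filter of $I$ as soon as we know it is inhabited.

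The heart of the proof is the claim: for every $f\in F$ there exists $g\in G$ with $g\leq f$. I would establish it by an iterative construction. Set $f_0:=f$. If $f_0\in I$ we are done; otherwise, since $F$ is nonprincipal and $f_0\in F$, we have $F\supsetneq [f_0)_K$, so there exists $y_0\in F$ with $y_0\not\geq f_0$, whence $f_1:=f_0\wedge y_0\in F$ satisfies $f_1<f_0$. Continue inductively: whenever $f_n\not\in I$, pick $y_n\in F$ with $y_n\not\geq f_n$ and set $f_{n+1}:=f_n\wedge y_n\in F$, so that $f_{n+1}<f_n$. If the process never enters $I$, we obtain an infinite strictly descending sequence $f_0>f_1>\cdots $ in $K\setminus I$, contradicting \textcircled{g}$_I$. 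Hence some $f_n$ lies in $F\cap I=G$ and satisfies $f_n\leq f$, as desired.

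Once the claim holds, $G$ is nonempty (apply it to any $f\in F$) and is a filter of $I$; the filter of $K$ generated by $G$ is then exactly $F$, since it is contained in $F$ (because $G\subseteq F$ and $F$ is a filter) and contains $F$ (because every $f\in F$ dominates some $g\in G$, and $G$ is already closed under meets). The main, and essentially only nontrivial, obstacle is the iterative step: its termination is forced by combining the nonprincipality of $F$ (which keeps producing strictly smaller elements inside $F$) with \textcircled{g}$_I$ (which forbids the resulting descending chain from avoiding $I$ forever).
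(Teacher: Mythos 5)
Your proof is correct. The paper itself imports this lemma from \cite[Lemma 3.1]{gccm} without reproducing its proof, but your argument --- taking $G=F\cap I$, using nonprincipality of $F$ to keep producing strictly smaller elements of $F$ until \textcircled{g}$_{I}$ forces the descending chain into $I$, and then checking $F=[G)_K$ --- is the natural and essentially the original one.
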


\begin{theorem} {\rm \cite[Theorem 1.1]{gccm}} For any infinite cardinal number $\nu $ and any cardinal number $\kappa $ with $2\leq \kappa \leq \nu $ or $\kappa =2^{\nu }$, there exists a bounded lattice $M_{\nu ,\kappa }$ with $|M_{\nu ,\kappa }|=|{\rm Filt}(M_{\nu ,\kappa })|=\nu $, $|{\rm Id}(M_{\nu ,\kappa })|=2^{\nu }$ and $|{\rm Con}(M_{\nu ,\kappa })|=\kappa $. Furthermore, $M_{\nu ,2^{\nu }}$ can be chosen to be distributive.\label{gccmth}\end{theorem}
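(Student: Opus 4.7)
The plan is to split on $\kappa$: an explicit distributive construction for $\kappa=2^{\nu}$, a simple modification for $\kappa=2$, and a transfinite iteration (using Lemmas~\ref{czedlis} and \ref{elemncg}) for $3\leq\kappa\leq\nu$.

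For $\kappa=2^{\nu}$, I take $L_0:=\mathcal{P}_{\mathrm{fin}}(\nu)\cup\{\mathbf{1}\}$, the bounded distributive lattice of finite subsets of $\nu$ with a new top. A strictly descending chain of finite sets has strictly decreasing cardinalities and must terminate, so every proper filter has a smallest element and is either $\{\mathbf{1}\}$ or principal, giving $|\mathrm{Filt}(L_0)|=\nu$. Every ideal equals $\{A\in\mathcal{P}_{\mathrm{fin}}(\nu):A\subseteq S\}$ for the unique $S=\bigcup I$, giving $|\mathrm{Id}(L_0)|=2^{\nu}$. The congruences $\theta_S$ defined by $A\,\theta_S\,B$ iff $A\cap S=B\cap S$ (fixing $\mathbf{1}$) are $2^{\nu}$ in number, and reconstructing $S=\{\alpha\in\nu:(\{\alpha\},\emptyset)\notin\theta\}$ shows that they (together with at most $\nu$ further congruences collapsing $\mathbf{1}$ with a finite set) exhaust $\mathrm{Con}(L_0)$. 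Set $M_{\nu,2^{\nu}}:=L_0$.

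For $\kappa=2$, set $A_2:=L_0\boxplus\mathcal{L}_2^{2}$. Since $(\{\alpha\},\emptyset)\in\theta_S$ for any $\alpha\notin S$, one has $0/\theta_S=\{\emptyset\}$ only when $S=\nu$, whence $\mathrm{Con}_{01}(L_0)=\{\Delta\}$, and the excerpt's formula $|\mathrm{Con}(L\boxplus\mathcal{L}_2^{2})|=|\mathrm{Con}_{01}(L)|+1$ gives $|\mathrm{Con}(A_2)|=2$, making $A_2$ simple (with the required cardinalities inherited from $L_0$). For $3\leq\kappa\leq\nu$, I apply Lemma~\ref{elemncg} in the variety of lattices with $\tau=\kappa$: starting from $A_2$, set $A_{\mu+1}:=\B(A_\mu)\boxplus\mathcal{L}_2^{2}$ at successors and $A_\iota:=\bigcup_{\lambda<\iota}A_\lambda$ at limits. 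The excerpt's identity $|\mathrm{Con}(\B(M)\boxplus\mathcal{L}_2^{2})|=|\mathrm{Con}(M)|+1$, together with the explicit description of the new congruence as ``collapse $A_\mu$ and keep the four added elements as singletons'', delivers \textcircled{c} at each successor; Lemma~\ref{czedlis} propagates \textcircled{s} and \textcircled{c} through limits. Then Lemma~\ref{elemncg}(\ref{elemncg1}) gives $|\mathrm{Con}(A_\kappa)|=\kappa$, while (\ref{elemncg2}) with $\kappa_\mu=4$ gives $|A_\mu|=\nu$. Filter counts survive by the $+4$ accounting at successors and by Lemma~\ref{alsoczedlis} at limits (nonprincipal filters of $A_\iota$ come from filters of a suitable ideal); ideal counts are sandwiched between $2^{\nu}$ from below (via the surjection $J\mapsto J\cap A_2$ from $\mathrm{Id}(A_\iota)$ onto $\mathrm{Id}(A_2)$, using that $A_2$ is a sublattice of $A_\iota$) and $2^{|A_\iota|}=2^{\nu}$ from above.

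The hardest step I anticipate is reconciling boundedness with the iteration: each $A_{\mu+1}=\B(A_\mu)\boxplus\mathcal{L}_2^{2}$ introduces fresh extrema, so at a limit ordinal $\iota$ the union $A_\iota$ is unbounded. When $\kappa$ itself is a limit cardinal, $M_{\nu,\kappa}$ cannot simply be $A_\kappa$ and one must either cap with a final $\B$-step (and absorb the extra congruence into the count) or redesign the successor construction so as to preserve the bounds of $A_2$ throughout. A related subtlety is locating an ideal $I$ of $A_\iota$ fulfilling \textcircled{g}$_I$ in the application of Lemma~\ref{alsoczedlis}: since a strictly descending sequence in $A_\iota$ can walk through the chain of new bottoms $\mathbf{0}_\mu$ added at each successor, the natural choice is to take $I$ to be the downward closure in $A_\iota$ of $\{\mathbf{0}_\mu:\mu<\iota\}$, and then one verifies that every strictly descending sequence outside $I$ stabilises inside some $A_\lambda$.
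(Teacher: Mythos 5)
This theorem is only cited from \cite{gccm}; the present paper does not reprove it, but it does redevelop precisely the machinery you use: the lattice $M_{\nu,2^{\nu}}=\mathcal{P}_{\mathrm{fin}}(T)\cup\{1\}$ of \cite[Example 5.6]{eunoucard} appears verbatim in the proof of Theorem \ref{mainth}, and the transfinite iteration $L_{\mu +1}=\B (L_{\mu })\boxplus {\cal L}_2^2$ with directed unions at limits is exactly Proposition \ref{thelatconstr}. So your proposal is essentially the same argument, and it is correct in outline. Three loose ends, all repairable: first, ${\rm Con}_{01}(L_0)=\{\Delta \}$ should be argued directly (if $A\,\theta \,B$ with $\alpha \in A\setminus B$, meeting with $\{\alpha \}$ puts $\{\alpha \}$ into the class of $\emptyset $), rather than via your unproved claim that the $\theta _S$ essentially exhaust ${\rm Con}(L_0)$ --- for the count $|{\rm Con}(L_0)|=2^{\nu }$ the lower bound from the $\theta _S$ plus the trivial upper bound $2^{|L_0|}$ already suffice. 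Second, the boundedness problem at limit $\kappa $ that you flag is resolved exactly as you suspect: the paper indexes the family up to a successor ordinal $\tau =\sigma +1$ with $|\sigma |=\kappa $, which costs nothing since $|\sigma +1|=|\sigma |$ for infinite $\sigma $. Third, for \textcircled{g}$_{I}$ the paper takes the ideal generated by the base lattice (the base together with the descending chain of new bottom elements), which works for an arbitrary base; your smaller ideal consisting only of the chain of new bottoms suffices here solely because $\mathcal{P}_{\mathrm{fin}}(\nu )\cup \{\mathbf{1}\}$ satisfies the descending chain condition, and that is precisely what your phrase ``stabilises inside some $A_{\lambda }$'' is silently using.
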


\begin{remark} Let $\nu $ be an infinite cardinal number. For every $\kappa $ as in Theorem \ref{gccmth}, with the notations from this theorem, we have ${\rm Con}(M_{\nu ,\kappa }^d)={\rm Con}(M_{\nu ,\kappa })$, ${\rm Filt}(M_{\nu ,\kappa }^d)={\rm Id}(M_{\nu ,\kappa })$ and ${\rm Id}(M_{\nu ,\kappa }^d)={\rm Filt}(M_{\nu ,\kappa })$. If we consider, for each such $\kappa $, the pseudo--Kleene algebra $L_{\nu ,\kappa }=M_{\nu ,\kappa }\oplus M_{\nu ,\kappa }^d$, then $|{\rm Con}_{\I }(L_{\nu ,\kappa })|=|{\rm Con}(M_{\nu ,\kappa })|=\kappa $ and $|{\rm Filt}(L_{\nu ,\kappa })|=|{\rm Id}(L_{\nu ,\kappa })|=|{\rm Filt}(M_{\nu ,\kappa })|+|{\rm Id}(M_{\nu ,\kappa })|-1=\nu +2^{\nu }-1=2^{\nu }$. By the above, $L_{\nu ,2^{\nu }}$ can be chosen to be a Kleene lattice.

Under the GCD, the cardinal numbers $\kappa $ above take each value between $2$ and the cardinality $2^{\nu }$ of the sets of subsets of the lattices $M_{\nu ,\kappa }$ and $M_{\nu ,\kappa }^d$, which have the only possible values for the cardinalities of their sets of filters and ideals under the condition that these cardinalities are different.\label{fromgccmth}\end{remark}

Now we revisit the technique from the proof in \cite{gccm} of Theorem \ref{gccmth}; we apply the construction from \cite[Section $3$]{gccm} to an arbitrary lattice $L$ and we also consider the case when $L$ is an involution lattice. We will apply Lemma \ref{alsoczedlis} in a slightly different manner than in \cite[Section $3$]{gccm}, so that, in statement (\ref{thelatconstr2}) of the following proposition, we do not need to confine ourselves to the cases when $L$ satisfies the Descending or the Ascending Chain Condition or has as many filters or ideals as subsets; instead, this statement holds for any lattice $L$ and does not necessitate enforcing the Continuum Hypothesis. Of course, if a lattice $L$ has all filters principal, then $|{\rm Filt}(L)|=|L|$, and the same goes for ideals, but the converses do not hold; for instance, if $\nu $ is an infinite cardinal number and we let $L={\cal M}_{2^{\nu }}\oplus {\cal L}_2^{\nu }$, then, since the Boolean algebra ${\cal L}_2^{\nu }$ has as many filters and as many ideals and subsets, while the lattice ${\cal M}_{2^{\nu }}$ has finite length and thus all filters and ideals principal, we have $|{\rm Filt}(L)|=|{\rm Id}(L)|=|{\rm Filt}({\cal M}_{2^{\nu }})|+|{\rm Filt}({\cal L}_2^{\nu })|-1=|{\rm Id}({\cal M}_{2^{\nu }})|+|{\rm Id}({\cal L}_2^{\nu })|-1=2^{\nu }+2^{\nu }-1=2^{\nu }=|L|$, and $L$ has nonprincipal filters, namely the nonprincipal filters of ${\cal L}_2^{\nu }$, and nonprincipal ideals, namely the unions of ${\cal M}_{2^{\nu }}$ with nonprincipal ideals of ${\cal L}_2^{\nu }$; see in \cite{eucard} more examples of lattices with as many filters and ideals as elements, but having nonprincipal filters and nonprincipal ideals, thus failing both the Descending and the Ascending Chain Condition.

Let $L$ be an (involution) lattice, $\kappa $ be a cardinal number with $2\leq \kappa $, $\sigma $ an ordinal with $|\sigma |=\kappa $, $\tau =\sigma +1$, so that $|\tau |=|\sigma |+1=\kappa +1$, and $I=\{\mu \ |\ 2\leq \mu \leq \tau \}=\{\mu \ |\ 2\leq \mu \leq \sigma \}\cup \{\tau \}$. We add the succesor ordinal $\tau $ to ensure the boundeness of the lattice $L_{\max (I)}=L_{\tau }$ in the family of lattices we construct in what follows.

We define inductively a family $(L_{\mu })_{\mu \in I}$ of (involution) lattices, in the following way: $L_2=L$ and, for every $\iota \in I\setminus \{2\}=\{\mu \ |\ 3\leq \mu \leq \tau \}$:\begin{itemize}
\item if $\iota $ is a successor ordinal, $\iota =\mu +1$ for a $\mu \in I$, then we define $L_{\iota }=\B (L_{\mu })\boxplus {\cal L}_2^2$, as a horizontal sum of bounded (involution) lattices;
\item if $\iota $ is a limit ordinal, then we define $L_{\iota }$ to be the directed union of the family of (involution) lattices $(L_{\mu })_{2\leq \mu <\iota }$.\end{itemize}

Note that $L_{\iota }$ is a bounded (involution) lattice for every successor ordinal $\iota \in I\setminus \{2\}$, in particular $L_{\tau }$ is a bounded (involution) lattice. Moreover, in the subfamily $(L_{\mu })_{\mu \in I\setminus \{2\}}$, the bounded members are exactly those indexed by successor ordinals.

\begin{remark} For the case of bounded involution lattices in the following proposition, we can also define, for every $\mu \in I$ such that $\iota =\mu +1\in I$, the bi--lattice $L_{\iota }$ to be the horizontal sum of bi--lattices ${\cal L}_3\boxplus \B (L_{\mu })\boxplus {\cal L}_3$, and the statements in the proposition still hold.

However, if we let the involution of $L_{\iota }$ to be defined as in the horizontal sum of the bi--lattice $\B (L_{\mu })$ with the four--element Boolean algebra, and $L$ satisfies condition \textcircled{k} (so that $L$ is a pseudo--Kleene algebra in the particular case when $L$ is bounded), then it is easy to see that each member of the family $(L_{\mu })_{\mu \in I}$ satisfies \textcircled{k}, and thus $L_{\iota }$ is a pseudo--Kleene algebra for every successor ordinal $\iota \in I\setminus \{2\}$, in particular $L_{\tau }$ is a pseudo--Kleene algebra.\label{pkas}\end{remark}

\begin{proposition} With the notations above, if $L$ is non--trivial, then:\begin{enumerate}
\item\label{thelatconstr1} the family $(L_{\mu })_{\mu \in I}$ satisfies condition \textcircled{s} and, if $L\in \I $, so that $(L_{\mu })_{\mu \in I}\subset \I $, then also condition \textcircled{s}$_{\I }$;
\item\label{thelatconstr3} if $L$ is a nontrivial simple lattice, then the family $(L_{\mu })_{\mu \in I}$ satisfies condition \textcircled{c}, so, for all $\mu \in I$, $|{\rm Con}(L_{\mu })|=|\mu |$, in particular $|{\rm Con}(L_{\tau })|=\kappa $;
\item\label{thelatconstr4} if $L$ is a nontrivial simple i--lattice, then the family $(L_{\mu })_{\mu \in I}$ satisfies condition \textcircled{c}$_{\I }$, so, for all $\mu \in I$, $|{\rm Con}_{\I }(L_{\mu })|=|\mu |$, in particular $|{\rm Con}_{\I }(L_{\tau })|=\kappa $;
\item\label{thelatconstr5} if $L$ is a nontrivial i--lattice with a simple lattice reduct, then the family $(L_{\mu })_{\mu \in I}$ satisfies conditions \textcircled{c} and \textcircled{c}$_{\I }$, so, for all $\mu \in I$, ${\rm Con}_{\I }(L_{\mu })={\rm Con}(L_{\mu })$ and $|{\rm Con}_{\I }(L_{\mu })|=|{\rm Con}(L_{\mu })|=|\mu |$, in particular $|{\rm Con}_{\I }(L_{\tau })|=|{\rm Con}(L_{\tau })|=\kappa $;
\item\label{thelatconstr2} if $L$ is an infinite lattice and $|L|=\nu \geq \kappa $, then, for all $\mu \in I$, $|L_{\mu }|=\nu $, $|{\rm Filt}(L_{\mu })|=|{\rm Filt}(L)|$ and $|{\rm Id}(L_{\mu })|=|{\rm Id}(L)|$.\end{enumerate}\label{thelatconstr}\end{proposition}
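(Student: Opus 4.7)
The plan is a transfinite induction on $\mu \in I$, with base case $\mu = 2$ being the hypotheses on $L$. For part (\ref{thelatconstr1}), at a successor step $\iota = \mu + 1$ we have $L_\mu \subsetneq \B(L_\mu) \subsetneq \B(L_\mu) \boxplus {\cal L}_2^2 = L_\iota$ as (involution) sublattices, which combined with the inductive transitivity of \textcircled{s} (and \textcircled{s}$_{\I }$) gives the condition for every $\lambda < \iota$; limit steps are handled by Lemma~\ref{czedlis}. For parts (\ref{thelatconstr3})--(\ref{thelatconstr5}), the successor step invokes the formula from the preliminaries: since $L_\mu$ is nontrivial by induction, $\B(L_\mu)$ has at least four elements and so
\[
{\rm Con}(L_\iota) = \{eq(L_\mu/\theta \cup \{\{0\},\{a\},\{b\},\{1\}\}) : \theta \in {\rm Con}(L_\mu)\} \cup \{\nabla_{L_\iota}\},
\]
with the analogous equality for ${\rm Con}_{\I }$ when $L_\mu$ is an involution lattice. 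Substituting the inductive hypothesis \textcircled{c}$_{\V }$ for $L_\mu$ in its equivalent form (enumerating $\lambda \leq \mu$) and using $L_\iota \setminus L_\lambda = (L_\mu \setminus L_\lambda) \cup \{0,a,b,1\}$, the right-hand side rewrites as \textcircled{c}$_{\V }$ for $L_\iota$: the $\theta = \Delta_{L_\mu}$ term gives $\Delta_{L_\iota}$, each $\theta$ collapsing $L_\lambda$ gives the corresponding congruence collapsing $L_\lambda$ in $L_\iota$, and the $\theta = \nabla_{L_\mu}$ term gives the congruence collapsing $L_\mu$ in $L_\iota$. Limit steps are absorbed by Lemma~\ref{czedlis}; Lemma~\ref{elemncg}(\ref{elemncg1}) produces the cardinality $|{\rm Con}_{\V }(L_\mu)| = |\mu|$; and part (\ref{thelatconstr5}) follows because ${\rm Con}_{\I }(L_\mu) \subseteq {\rm Con}(L_\mu)$ and both sides have cardinality $|\mu|$.

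For part (\ref{thelatconstr2}), the element count $|L_\mu| = \nu$ is immediate from Lemma~\ref{elemncg}(\ref{elemncg2}) with $\kappa_\mu = 4 \leq \nu$ (the hypothesis $\nu \geq |\tau|$ reduces to $\nu \geq \kappa$, which is given, since $\nu$ is infinite). At each successor step the formulas $|{\rm Filt}(L_\iota)| = |{\rm Filt}(L_\mu)| + 4$ and $|{\rm Id}(L_\iota)| = |{\rm Id}(L_\mu)| + 4$ from the preliminaries preserve the infinite cardinalities.

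The decisive technical step is the limit case for filters (ideals being dual), handled via Lemma~\ref{alsoczedlis}. For $\lambda \geq 3$ let $\hat{0}_\lambda$, $\hat{1}_\lambda$, $a_\lambda$, $b_\lambda$ denote the four new elements introduced at step $\lambda$, and set $J = L \cup \{\hat{0}_\lambda : 3 \leq \lambda < \iota\}$. One verifies directly that $J$ is an ideal of $L_\iota$, since the $\hat{0}_\lambda$'s form a descending chain sitting below $L$ and closed under joins. Condition \textcircled{g}$_J$ holds because $L_\iota \setminus J = \{\hat{1}_\lambda, a_\lambda, b_\lambda : 3 \leq \lambda < \iota\}$ admits no infinite strictly descending chain: the $\hat{1}_\lambda$'s are well-ordered ascending so strictly descending sequences among them are finite, and the only elements below any $a_\lambda$ or $b_\lambda$ are the $\hat{0}_\mu$'s with $\mu \geq \lambda$, already in $J$. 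A direct enumeration yields $|{\rm Filt}(J)| = |{\rm Filt}(L)| + |\iota| + 1 = |{\rm Filt}(L)|$, since filters of $J$ disjoint from $\{\hat{0}_\lambda\}$ coincide with the filters of $L$, while those containing some $\hat{0}_\lambda$ are either principal (generated by the largest such) or the improper filter $J$ itself. By Lemma~\ref{alsoczedlis}, every nonprincipal filter of $L_\iota$ is the upward closure in $L_\iota$ of a filter of $J$, so $|{\rm Filt}(L_\iota)| \leq |L_\iota| + |{\rm Filt}(J)| = \nu + |{\rm Filt}(L)| = |{\rm Filt}(L)|$; the reverse inequality comes from the injection $G \mapsto \uparrow_{L_\iota} G$. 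The ideal count is dual, via the filter $L \cup \{\hat{1}_\lambda : 3 \leq \lambda < \iota\}$ of $L_\iota$. The chief obstacle is the identification of the auxiliary ideal $J$ and the verification of \textcircled{g}$_J$; once these are in place the counting is routine, and the gain over \cite[Section~3]{gccm} is precisely that this argument avoids imposing chain conditions on $L$.
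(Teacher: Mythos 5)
Your proof follows essentially the same route as the paper's: the same transfinite induction, the same congruence formula for $\B (L_{\mu })\boxplus {\cal L}_2^2$ at successor steps with Lemma~\ref{czedlis} at limit steps, and, for part~(\ref{thelatconstr2}), the same auxiliary ideal $J=(L]_{L_{\iota }}$ together with Lemma~\ref{alsoczedlis}; your direct bound $|{\rm Filt}(L_{\iota })|\leq |L_{\iota }|+|{\rm Filt}(J)|$ is merely a more compact packaging of the paper's count of nonprincipal filters. One caveat: in part~(\ref{thelatconstr5}) you justify ${\rm Con}_{\I }(L_{\mu })={\rm Con}(L_{\mu })$ by noting that the former is contained in the latter and both have cardinality $|\mu |$ --- for infinite $\mu $ this inference is invalid; the equality should instead be read off from the fact that conditions \textcircled{c} and \textcircled{c}$_{\I }$ describe the two congruence sets by the identical explicit list of equivalences (which you have already established, and which is how the paper argues).
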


\begin{proof} (\ref{thelatconstr1}) The singleton family $\{L_2\}=\{L\}$ trivially satisfies condition \textcircled{s}, respectively \textcircled{s}$_{\I }$. For every $\iota \in I\setminus \{1\}$, if $\iota $ is a successor ordinal, $\iota =\mu +1$ for some $\mu \in I$, then, by the definition of $L_{\iota }$, we have $L_{\mu }\in \S (L_{\iota })$, respectively $L_{\mu }\in \S _{\I }(L_{\iota })$, while, if $\iota $ is a limit ordinal, then, again by the definition of $L_{\iota }$, we have $L_{\lambda }\in \S (L_{\iota })$, respectively $L_{\lambda }\in \S _{\I }(L_{\iota })$, for each $2\leq \lambda <\iota $. So an immediate induction argument shows that the family $(L_{\mu })_{\mu \in I}$ satisfies condition \textcircled{s}, respectively \textcircled{s}$_{\I }$.

\noindent (\ref{thelatconstr3}),(\ref{thelatconstr4}) We apply induction. Assume that $L_2=L$ is a simple lattice, respectively a simple i--lattice, so that the singleton family $\{L_2\}=\{L\}$ satisfies condition \textcircled{c}, respectively \textcircled{c}$_{\I }$. Now let $\iota \in I\setminus \{2\}$, and let $\V $ be the variety of lattices in the case of (\ref{thelatconstr3}), respectively $\V =\I $ in the case of (\ref{thelatconstr4}).

If $\iota $ is a successor ordinal, $\iota =\mu +1$ for some $\mu \in I$ such that the family $(L_{\lambda })_{2\leq \lambda \leq \mu }$ satisfies condition \textcircled{c}$_{\V }$, then $L_{\iota }=\B (L_{\mu })\boxplus {\cal L}_2^2$ and ${\rm Con}_{\V }(L_{\mu })=\{\Delta _{L_{\mu }}\}\cup \{eq(\{L_{\lambda }\}\cup \{\{x\}\ |\ x\in L_{\mu }\setminus L_{\lambda }\})\ |\ 2\leq \lambda \leq \mu \}$, therefore, by (\ref{thelatconstr1}) and the congruences of the construction $L_{\iota }=\B (L_{\mu })\boxplus {\cal L}_2^2$ determined at the end of section \ref{thealg}, ${\rm Con}_{\V }(L_{\iota })=\{\nabla _{L_{\iota }}\}\cup \{eq(L_{\mu }/\theta \cup \{\{x\}\ |\ x\in {\cal L}_2^2=L_{\iota }\setminus L_{\mu }\})\ |\ \theta \in {\rm Con}_{\V }(L_{\mu })\}=\{\Delta _{L_{\iota }},\nabla _{L_{\iota }}\}\cup \{eq(\{L_{\lambda }\}\cup \{\{x\}\ |\ x\in L_{\iota }\setminus L_{\lambda }\})\ |\ 2\leq \lambda \leq \mu \}$, hence the family $(L_{\lambda })_{2\leq \lambda \leq \iota }$ also satisfies condition \textcircled{c}$_{\V }$.

If $\iota $ is a limit ordinal such that the family $(L_{\lambda })_{2\leq \lambda <\iota }$ satisfies condition \textcircled{c}$_{\V }$, then, by (\ref{thelatconstr1}) and Lemma \ref{czedlis}, the family $(L_{\lambda })_{2\leq \lambda \leq \iota }$ also satisfies condition \textcircled{c}$_{\V }$.

By the induction principle, it follows that the family $(L_{\mu })_{2\leq \mu \leq \tau }$ satisfies condition \textcircled{c}$_{\V }$. By (\ref{thelatconstr1}) and Lemma \ref{elemncg}, (\ref{elemncg1}), it follows that $|{\rm Con}_{\V }(L_{\mu })|=|\mu |$ for all ordinals $\mu $ with $2\leq \mu \leq \tau $.

\noindent (\ref{thelatconstr5}) By (\ref{thelatconstr3}) and (\ref{thelatconstr4}), along with the description of the congruences in conditions \textcircled{c} and \textcircled{c}$_{\I }$ and the obvious fact that, if the 
lattice reduct of the i--lattice $L$ is simple, then so is $L$.

\noindent (\ref{thelatconstr2}) By Lemma \ref{elemncg}, (\ref{elemncg2}), we have $|L_{\mu }|=\nu $ for all $\mu \in I$. The property of the numbers of filters and ideals is trivial for $L_2=L$.

Now let $\iota \in I\setminus \{2\}$. Let us consider the ideal $J=(L]_{L_{\iota }}=(1^{L_3}]_{L_{\iota }}\setminus \{1^{L_3}\}=(0^{L_3}]_{L_{\iota }}\cup L=\{0^{L_{\lambda +1}}\ |\ 2\leq \lambda <\tau \}\cup L$ since the chain $(0^{L_3}]_{L_{\iota }}$ is formed of the elements $0^{L_{\mu }}$ with $\mu $ a successor ordinal in $I$. $J$ is a principal ideal of $L_{\iota }$ iff $L$ has a top element. Since the set $\{\mu \ |\ 2\leq \mu \leq \iota \}$ is well ordered and thus so is the filter $[1^{L_3})_{L_{\iota }}=\{1^{L_{\lambda +1}}\ |\ 2\leq \lambda <\iota \}$, it is easy to notice that $L_{\iota }$ satisfies the property \textcircled{g}$_J$, so, by Lemma \ref{alsoczedlis}, every nonprincipal filter of $L_{\iota }$ is generated by a filter of $J$.

Let $F$ be a nonprincipal filter of $L_{\iota }$. Then there exists a filter $G$ of $J=(0^{L_3}]_{L_{\iota }}\cup L$ such that $F=[G)_{L_{\iota }}$.

If $G\subseteq L$, so that $G$ is a filter of $L$, then $F=[G)_{L_{\iota }}=G\cup [1^{L_3})_{L_{\iota }}$, hence $G$ is nonprincipal since $F$ is nonprincipal.

If $G\nsubseteq L$, then $G\cap (J\setminus L)=G\cap (0^{L_3}]_{L_{\iota }}$ is nonempty, hence $H=G\cap (0^{L_3}]_{L_{\iota }}$ is a filter of $(0^{L_3}]_{L_{\iota }}$ and clearly, if we denote by $a_{L_{\mu }},b_{L_{\mu }}$ the two incomparable elements of the copy of ${\cal L}_2^2$ from $L_{\mu }=\B (L_{\lambda })\boxplus {\cal L}_2^2$ for each successor ordinal $\mu =\lambda +1$ with $\lambda \in I\setminus \{\tau \}$, then $\displaystyle F=[G)_{L_{\iota }}=[H)_{L_{\iota }}=H\cup \bigcup _{\mu \in I,0^{L_{\mu }}\in H}[0^{L_{\mu }},1^{L_{\mu }}]_{L_{\iota }}\cup L\cup [1^{L_3})_{L_{\iota }}=H\cup \{0^{L_{\mu }},a_{L_{\mu }},b_{L_{\mu }}\ |\ \mu \in I,0^{L_{\mu }}\in H\}\cup L\cup [1^{L_3})_{L_{\iota }}$, and thus $H$ is nonprincipal since $F$ is nonprincipal. $(0^{L_3}]_{L_{\iota }}=\{\lambda +1\ |\ 2\leq \lambda <\iota \}$ is dually well ordered, hence it has all ideals principal and thus, since it is a chain, $|{\rm Filt}((0^{L_3}]_{L_{\iota }})|=|{\rm Id}((0^{L_3}]_{L_{\iota }})|=|(0^{L_3}]_{L_{\iota }}|\leq |\iota |\leq |\tau |=\kappa \leq \nu $.

By the above, clearly, $G$ (thus also $G\cap (0^{L_3}]_{L_{\iota }}$ in the second case above) is uniquely determined by $F$, and hence $|{\rm Filt}(L_{\iota })|=|{\rm PFilt}(L_{\iota })|+|{\rm Filt}(L_{\iota })\setminus {\rm PFilt}(L_{\iota })|=|L_{\iota }|+|{\rm Filt}(J)\setminus {\rm PFilt}(J)|=\nu +|{\rm Filt}(J)\setminus {\rm PFilt}(J)|=|L|+|{\rm Filt}(L)\setminus {\rm PFilt}(L)|+|{\rm Filt}((0^{L_3}]_{L_{\iota }})\setminus {\rm PFilt}((0^{L_3}]_{L_{\iota }})|=|{\rm PFilt}(L)|+|{\rm Filt}(L)\setminus {\rm PFilt}(L)|+|{\rm Filt}((0^{L_3}]_{L_{\iota }})\setminus {\rm PFilt}((0^{L_3}]_{L_{\iota }})|=|{\rm Filt}(L)|+|{\rm Filt}((0^{L_3}]_{L_{\iota }})\setminus {\rm PFilt}((0^{L_3}]_{L_{\iota }})|=|{\rm Filt}(L)|$ since $|{\rm Filt}(L)|\geq |{\rm PFilt}(L)|=|L|=\nu \geq |{\rm Filt}((0^{L_3}]_{L_{\iota }})|\geq |{\rm Filt}((0^{L_3}]_{L_{\iota }})\setminus {\rm PFilt}((0^{L_3}]_{L_{\iota }})|$.

By duality, it follows that $|{\rm Id}(L_{\iota })|=|{\rm Id}(L)|$.\end{proof}

\begin{corollary} For any infinite simple (involution) lattice $L$ and every cardinal number $\kappa $ with $3\leq \kappa \leq |L|$, there exists a bounded (involution) lattice $M$ with $|M|=|L|$, $|{\rm Filt}(M)|=|{\rm Filt}(L)|$, $|{\rm Id}(M)|=|{\rm Id}(L)|$ and:\begin{enumerate}
\item\label{startfromL1} if $L$ is a simple lattice, then $|{\rm Con}(M)|=\kappa $;
\item\label{startfromL2} if $L$ is a simple i--lattice, then $|{\rm Con}_{\I }(M)|=\kappa $;
\item\label{startfromL3} if $L$ is an i--lattice with a simple lattice reduct, then ${\rm Con}_{\I }(M)={\rm Con}(M)$ and $|{\rm Con}_{\I }(M)|=|{\rm Con}(M)|=\kappa $;
\item\label{startfromL0} $L$ is an i--lattice and satisfies condition \textcircled{k}, then $M$ is a pseudo--Kleene algebra.\end{enumerate}\label{startfromL}\end{corollary}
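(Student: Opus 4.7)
The proof amounts to a direct application of Proposition~\ref{thelatconstr} to the given $L$. Given $\kappa$ with $3\leq\kappa\leq |L|$, I would first choose an ordinal $\sigma\geq 2$ so that $|\sigma+1|=\kappa$: if $\kappa$ is infinite, take $\sigma$ to be the initial ordinal of cardinality $\kappa$; if $\kappa\geq 3$ is finite, take the natural number $\sigma=\kappa-1$. Set $\tau=\sigma+1$ and $M=L_\tau$. Since $\tau$ is a successor ordinal, the inductive step of the construction gives $L_\tau=\B(L_\sigma)\boxplus{\cal L}_2^2$, so $M$ is bounded, and in the i--lattice case inherits an involution by construction.

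The cardinality claims are now immediate. By Proposition~\ref{thelatconstr}(\ref{thelatconstr2}) with $\nu=|L|\geq\kappa$, we have $|M|=|L|$, $|{\rm Filt}(M)|=|{\rm Filt}(L)|$ and $|{\rm Id}(M)|=|{\rm Id}(L)|$. Claims (\ref{startfromL1}), (\ref{startfromL2}), (\ref{startfromL3}) follow from parts (\ref{thelatconstr3}), (\ref{thelatconstr4}), (\ref{thelatconstr5}) of the proposition respectively, each of which, together with Lemma~\ref{elemncg}(\ref{elemncg1}), supplies the desired congruence count $|\tau|=\kappa$. For claim (\ref{startfromL0}), I would use the variant of the inductive construction described in Remark~\ref{pkas}, defining the involution at each step $L_\iota=\B(L_\mu)\boxplus{\cal L}_2^2$ as in the horizontal sum with the four--element Boolean algebra; by that remark, condition \textcircled{k} propagates through the induction, so $M=L_\tau$ is a pseudo--Kleene algebra whenever $L$ is an i--lattice satisfying \textcircled{k}.

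There is essentially no main obstacle: Proposition~\ref{thelatconstr} and Remark~\ref{pkas} have already done the substantive work, and the only point of bookkeeping is the choice of $\sigma$ ensuring $|\sigma+1|=\kappa$. The lower bound $\kappa\geq 3$ in the hypothesis is precisely the smallest value attainable by this construction, corresponding to the base case $\sigma=2$, in which the singleton family $\{L_2\}=\{L\}$ satisfies condition \textcircled{c}$_{\V}$ simply because $L$ is simple by hypothesis.
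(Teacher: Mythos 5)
Your proposal is correct and follows essentially the same route as the paper: apply the inductive construction with $L_2=L$ and $\tau=\sigma+1$ chosen so that $|\tau|=\kappa$, take $M=L_{\tau}$, and invoke Proposition~\ref{thelatconstr} for claims (\ref{startfromL1})--(\ref{startfromL3}) and Remark~\ref{pkas} for claim (\ref{startfromL0}). Your explicit case split on finite versus infinite $\kappa$ when choosing $\sigma$ is a slightly more careful piece of bookkeeping than the paper's phrasing, but the argument is the same.
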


\begin{proof} We apply to $L$ the construction above, with $L_2=L$ and $|\sigma |=|\tau |=\kappa $, take $M=L_{\tau }$ and apply Proposition \ref{thelatconstr} to obtain (\ref{startfromL1}), (\ref{startfromL2}) and (\ref{startfromL3}), then Remark \ref{pkas} to obtain (\ref{startfromL0}).\end{proof}

Note that the pseudo--Kleene algebras $L_{\nu ,\kappa }$ from Remark \ref{fromgccmth} have ${\rm Con}(L_{\nu ,\kappa })\cong {\rm Con}(M_{\nu ,\kappa })\times {\rm Con}(M_{\nu ,\kappa })^d={\rm Con}(M_{\nu ,\kappa })\times {\rm Con}(M_{\nu ,\kappa })$, hence $|{\rm Con}(L_{\nu ,\kappa })|=|{\rm Con}(M_{\nu ,\kappa })|^2=\kappa ^2$. Let us also obtain such pseudo--Kleene algebras with $\kappa $ many (involution--preserving) congruences, and, moreover, with their congruences coinciding to those of their lattice reducts: 

\begin{theorem} For any infinite cardinal number $\nu $, any cardinal number $\kappa $ with $2\leq \kappa \leq \nu $ or $\kappa =2^{\nu }$ and each $\mu \in \{\nu ,2^{\nu }\}$, there exists a bounded (involution) lattice $L_{\nu ,\mu ,\kappa }$ with $|{\rm Filt}(L_{\nu ,\mu ,\kappa })|=|{\rm Id}(L_{\nu ,\mu ,\kappa })|=\mu $, $|{\rm Con}(L_{\nu ,\mu ,\kappa })|=\kappa $ and, in the case when $L_{\nu ,\mu ,\kappa }$ is a bi--lattice, $|{\rm Con}_{\I }(L_{\nu ,\mu ,\kappa })|=|{\rm Con}(L_{\nu ,\mu ,\kappa })|=\kappa $ and, furthermore, $L_{\nu ,\mu ,\kappa }$ can be chosen to be a pseudo--Kleene algebra and, for $\kappa \leq \nu $, such that ${\rm Con}_{\I }(L_{\nu ,\mu ,\kappa })={\rm Con}(L_{\nu ,\mu ,\kappa })$.\label{mainth}\end{theorem}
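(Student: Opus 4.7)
The plan is to split into three cases on $(\mu,\kappa)$ and to invoke Corollary~\ref{startfromL} wherever possible, falling back on a chain--based construction only for the one remaining case.

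For the cases $\mu=2^{\nu}$ (any admissible $\kappa$) and $\mu=\nu$ with $2\leq\kappa\leq\nu$, I take $L={\cal M}_{\lambda}$ with $\lambda=\mu$, organized as a bi--lattice by setting $0'=1$, $1'=0$ and pairing the $\lambda$ atoms via a fixed--point--free involution (which exists since $\lambda$ is infinite). Then $a\wedge a'=0$ for every $a\in L$, so \textcircled{k} holds trivially and $L$ is a pseudo--Kleene algebra whose lattice reduct is the simple modular lattice ${\cal M}_{\lambda}$; since $L$ has length~$3$, all filters and ideals are principal, giving $|L|=|{\rm Filt}(L)|=|{\rm Id}(L)|=\lambda=\mu$. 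For $\kappa=2$ I set $L_{\nu,\mu,2}=L$. For $3\leq\kappa\leq\lambda$, Corollary~\ref{startfromL}, parts~(\ref{startfromL3}) and~(\ref{startfromL0}), applied to~$L$ yields a pseudo--Kleene algebra $L_{\nu,\mu,\kappa}$ with $|{\rm Filt}|=|{\rm Id}|=\mu$, ${\rm Con}_{\I}={\rm Con}$, and both cardinalities equal to $\kappa$; the bounded--lattice case is obtained by taking the lattice reduct.

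For $\mu=\nu$ and $\kappa=2^{\nu}$ the corollary is unavailable (it would require $\kappa\leq|L|$), so I would instead take the well--ordered chain $C$ of order type $\nu+1$. This is a bounded chain with $|C|=\nu$; well--orderedness makes every filter of $C$ principal, so $|{\rm Filt}(C)|=\nu$, and dually every ideal is either $[0,\beta]_{C}$ or $[0,\lambda)_{C}$ for some limit ordinal $\lambda\leq\nu$, so $|{\rm Id}(C)|=\nu$. The congruences of $C$ are its convex partitions; on one hand $|{\rm Con}(C)|\leq 2^{|C|}=2^{\nu}$, and on the other each of the $2^{\nu}$ subsets $S$ of the set of successor gaps $\{(\alpha,\alpha+1):\alpha<\nu\}$ determines a distinct convex partition via $x\sim_{S}y$ iff no $\alpha$ with $(\alpha,\alpha+1)\in S$ lies in $[\min(x,y),\max(x,y))$, so $|{\rm Con}(C)|=2^{\nu}$. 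For the bounded--lattice statement I set $L_{\nu,\nu,2^{\nu}}=C$; for the bi--lattice statement I set $L_{\nu,\nu,2^{\nu}}=C\oplus C^{d}$, which is a bounded chain and hence a pseudo--Kleene algebra by Section~\ref{thealg}. The ordinal--sum formulas from Section~\ref{thealg} then give $|{\rm Filt}(C\oplus C^{d})|=|{\rm Id}(C\oplus C^{d})|=\nu+\nu-1=\nu$, $|{\rm Con}_{\I}(C\oplus C^{d})|=|{\rm Con}(C)|=2^{\nu}$, and $|{\rm Con}(C\oplus C^{d})|=|{\rm Con}(C)|^{2}=2^{\nu}$; the equality ${\rm Con}_{\I}={\rm Con}$ is not required here because $\kappa=2^{\nu}>\nu$.

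The main obstacle is this last case, in which I must produce a bounded (involution) lattice with the maximal number of congruences, $2^{\nu}$, but only $\nu$ filters and $\nu$ ideals. The well--ordered chain $\nu+1$ is designed precisely around this tension: well--orderedness caps the number of filters at $|L|=\nu$, while the $\nu$ successor gaps independently supply $2^{\nu}$ congruence choices. The only routine point at which the count of convex partitions could go wrong is at limit ordinals, but the trivial upper bound $|{\rm Con}(C)|\leq 2^{|C|}=2^{\nu}$ already pins down the cardinality without a detailed case analysis.
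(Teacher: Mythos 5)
Your $\mu =\nu $ branch is essentially the paper's own argument: the same simple modular lattice ${\cal M}_{\nu }$ (the paper writes it as $\boxplus _{t\in T}{\cal L}_2^2$, which is exactly your fixed--point--free pairing of atoms) fed into Corollary \ref{startfromL} for $2\leq \kappa \leq \nu $, and the same well--ordered bounded chain $C$ with $L_{\nu ,\nu ,2^{\nu }}=C\oplus C^d$ for $\kappa =2^{\nu }$; your count of the convex partitions of $C$ via subsets of the successor gaps is the paper's count in a slightly different dress. That half is correct.

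The $\mu =2^{\nu }$ branch has a genuine gap. Taking $L={\cal M}_{2^{\nu }}$ produces an algebra of cardinality $2^{\nu }$, and Corollary \ref{startfromL} preserves cardinality, so every $L_{\nu ,2^{\nu },\kappa }$ you build has $2^{\nu }$ elements and $2^{\nu }$ ideals. While this satisfies the displayed equalities read in isolation, the theorem is meant to produce (and the paper's proof does produce) an algebra with $\nu $ \emph{elements} and $\mu $ ideals --- this is what the abstract's ``regardless of whether it has as many ideals as elements or as many ideals as subsets'' refers to, and it is what the subsequent corollary on antiortholattices explicitly invokes (``with $\nu $ elements, $\mu $ filters and ideals \dots as in Theorem \ref{mainth}''). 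Your construction for $\mu =2^{\nu }$ is therefore just the $\mu' =\nu' $ construction for the cardinal $\nu' =2^{\nu }$: it never produces a lattice with strictly more ideals than elements, which is the entire point of distinguishing the case $\mu =2^{\nu }$. The missing ingredient is the lattice $M_{\nu ,2^{\nu }}$ of \cite{eunoucard} (the finite subsets of a $\nu $--element set together with the top of ${\cal P}(T)$): it has $\nu $ elements, $\nu $ filters but $2^{\nu }$ ideals, is distributive (hence has $2^{\nu }$ congruences, settling $\kappa =2^{\nu }$ via $M_{\nu ,2^{\nu }}\oplus M_{\nu ,2^{\nu }}^d$) and is $0$--regular (so that $(M_{\nu ,2^{\nu }}\oplus M_{\nu ,2^{\nu }}^d)\boxplus {\cal L}_2^2$ is simple, settling $\kappa =2$ and providing the $\nu $--element, $2^{\nu }$--ideal seed to which Corollary \ref{startfromL} is applied for $3\leq \kappa \leq \nu $). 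Note also that Corollary \ref{startfromL} only allows $\kappa \leq |L|$, so with a $\nu $--element seed the value $\kappa =2^{\nu }$ genuinely needs the separate ordinal--sum construction; your plan hides this difficulty by inflating the underlying set.
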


\begin{proof} Let $T$ be a set with $|T|=\nu $ and let us consider the orthomodular lattice and thus pseudo--Kleene algebra $L_{\nu ,\nu ,2}={\cal M}_{\nu }={\cal M}_{\nu +\nu }=\boxplus _{t\in T}{\cal L}_2^2$, which has length $3$ and a simple lattice reduct, thus all filters and ideals principal and $|{\rm Con}_{\I }(L_{\nu ,\nu ,2})|=|{\rm Con}(L_{\nu ,\nu ,2})|=2$. By Corollary \ref{startfromL}, for every cardinal number $3\leq \kappa \leq \nu $, there exists a pseudo--Kleene algebra $L_{\nu ,\nu ,\kappa }$ with ${\rm Con}_{\I }(L_{\nu ,\nu ,\kappa })={\rm Con}(L_{\nu ,\nu ,\kappa })$, $|{\rm Con}_{\I }(L_{\nu ,\nu ,\kappa })|=|{\rm Con}(L_{\nu ,\nu ,\kappa })|=\kappa $ and $|{\rm Filt}(L_{\nu ,\nu ,\kappa })|=|{\rm Id}(L_{\nu ,\nu ,\kappa })|=|{\rm Filt}(L_{\nu ,\nu ,2})|=|{\rm Id}(L_{\nu ,\nu ,2})|=\nu $.

Now let $C$ be a well--ordered set with top element having $|C|=\nu $, thus a bounded chain with all filters principal and thus having $|{\rm Id}(C)|=|{\rm Filt}(C)|=|C|=\nu $. Since $C$ is a chain, each of its equivalences with all classes convex is a lattice congruence of $C$ \cite{gratzer,eucard,eunoucard}, hence, for any nonempty subset $S$ of $C\setminus \{0^C\}$, for each $a\in S\cup \{0^C\}$ such that $a$ is not a maximum for $S$, $a^+$ is the successor of $a$ in the well--ordered set $S\cup \{0^C\}$, and we denote by $\theta $ the equivalence on $C$ whose classes are the nonempty sets $[a,a^+]_C\setminus \{a^+\}$ for all $a$ as above, along with $\{x\in C\ |\ (\forall a\in S)\, (a<x)\}$ if this set is nonempty, then $\theta \in {\rm Con}(C)$. Therefore $2^{\nu }=2^{\nu -1}-1\leq |{\rm Con}(C)|\leq 2^{\nu }$, hence $|{\rm Con}(C)|=2^{\nu }$. Hence the Kleene chain $L_{\nu ,\nu ,2^{\nu }}=C\oplus C^d$ has $|{\rm Con}_{\I }(L_{\nu ,\nu ,2^{\nu }})|=|{\rm Con}(C)|=2^{\nu }$ and $|{\rm Filt}(L_{\nu ,\nu ,2^{\nu }})|=|{\rm Id}(L_{\nu ,\nu ,2^{\nu }})|=|{\rm Filt}(C)|+|{\rm Id}(C)|-1=\nu +\nu -1=\nu $.

Now we consider the pseudo--Kleene algebras $L_{\nu ,2^{\nu },2^{\nu }}=M_{\nu ,2^{\nu }}\oplus M_{\nu ,2^{\nu }}^d$ and $L_{\nu ,2^{\nu },2}=(M_{\nu ,2^{\nu }}\oplus M_{\nu ,2^{\nu }}^d)\boxplus {\cal L}_2^2$, where $M_{\nu ,2^{\nu }}$ is the following bounded lattice constructed in \cite[Example $5.6$]{eunoucard}: for a set $T$ with $|T|=\nu $, $M_{\nu ,2^{\nu }}$ is the bounded sublattice of the Boolean algebra ${\cal P}(T)\cong {\cal L}_2^{\nu }$ defined by $M_{\nu ,2^{\nu }}=\{(x_t)_{t\in T}\subseteq {\cal L}_2\ |\ |\{t\in T\ |\ x_t=1\}|<\aleph _0\}\cup \{1^{{\cal P}(T)}\}$. $|M_{\nu ,2^{\nu }}|=\nu $, hence $|L_{\nu ,2^{\nu },2^{\nu }}|=|L_{\nu ,2^{\nu },2}|=\nu $. $|{\rm Filt}(M_{\nu ,2^{\nu }})|=\nu $ and $|{\rm Id}(M_{\nu ,2^{\nu }})|=2^{\nu }$, thus $|{\rm Filt}(L_{\nu ,2^{\nu },2^{\nu }})|=|{\rm Id}(L_{\nu ,2^{\nu },2^{\nu }})|=|{\rm Filt}(L_{\nu ,2^{\nu },2})|=|{\rm Id}(L_{\nu ,2^{\nu },2})|=2^{\nu }$. $M_{\nu ,2^{\nu }}$ is distributive, hence it has more congruences than ideals and thus $|{\rm Con}_{\I }(L_{\nu ,2^{\nu },2^{\nu }})|=|{\rm Con}(M_{\nu ,2^{\nu }})|=2^{\nu }=2^{\nu }\cdot 2^{\nu }=|{\rm Con}(L_{\nu ,2^{\nu },2^{\nu }})|$. $M_{\nu ,2^{\nu }}$ is $0$--regular, so ${\rm Con}_0(M_{\nu ,2^{\nu }})=\{\Delta _{M_{\nu ,2^{\nu }}}\}$, thus ${\rm Con}_{01}(M_{\nu ,2^{\nu }}\oplus M_{\nu ,2^{\nu }}^d)=\{\Delta _{M_{\nu ,2^{\nu }}\oplus M_{\nu ,2^{\nu }}^d}\}$, hence ${\rm Con}_{\I }(L_{\nu ,2^{\nu },2})={\rm Con}(L_{\nu ,2^{\nu },2})=\{\Delta _{L_{\nu ,2^{\nu },2}},\nabla _{L_{\nu ,2^{\nu },2}}\}$.

By Corollary \ref{startfromL}, it follows that, for every cardinal number $3\leq \kappa \leq \nu $, there exists a pseudo--Kleene algebra $L_{\nu ,2^{\nu },\kappa }$ with ${\rm Con}_{\I }(L_{\nu ,2^{\nu },\kappa })={\rm Con}(L_{\nu ,2^{\nu },\kappa })$, $|{\rm Con}_{\I }(L_{\nu ,2^{\nu },\kappa })|=|{\rm Con}(L_{\nu ,2^{\nu },\kappa })|=\kappa $ and $|{\rm Filt}(L_{\nu ,2^{\nu },\kappa })|=|{\rm Id}(L_{\nu ,2^{\nu },\kappa })|=|{\rm Filt}(L_{\nu ,2^{\nu },2})|=|{\rm Id}(L_{\nu ,2^{\nu },2})|=2^{\nu }$.\end{proof}

\begin{corollary} For any infinite cardinal number $\nu $, any cardinal number $\kappa $ with $2\leq \kappa \leq \nu $ or $\kappa =2^{\nu }$ and each $\mu \in \{\nu ,2^{\nu }\}$, there exists an antiortholattice $A_{\nu ,\mu ,\kappa }$ with $|{\rm Filt}(A_{\nu ,\mu ,\kappa })|=|{\rm Id}(A_{\nu ,\mu ,\kappa })|=\mu $, $|{\rm Con}_{\BZ }(A_{\nu ,\mu ,\kappa })|=\kappa $ and such that, if $\kappa \leq \nu $, then ${\rm Con}_{\BZ }(A_{\nu ,\mu ,\kappa })={\rm Con}_{01}(A_{\nu ,\mu ,\kappa })\cup \{\nabla _{A_{\nu ,\mu ,\kappa }}\}$.\end{corollary}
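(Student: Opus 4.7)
The plan is to construct $A_{\nu,\mu,\kappa}$ as an ordinal sum of the form $L\oplus K\oplus L^d$ equipped with the trivial Brouwer complement, for a suitable nontrivial $0$--regular bounded lattice $L$ and a pseudo--Kleene algebra $K$ supplied by Theorem \ref{mainth}. Section \ref{thealg} already records that such a sum is automatically an antiortholattice satisfying $|{\rm Con}_{\BZ}(L\oplus K\oplus L^d)|=|{\rm Con}_{\I}(K)|+1$ and ${\rm Con}_{\BZ}(L\oplus K\oplus L^d)={\rm Con}_{\I 0}(L\oplus K\oplus L^d)\cup\{\nabla_{L\oplus K\oplus L^d}\}$, with filter and ideal counts determined by the ordinal-sum formulas $|{\rm Filt}(X\oplus Y)|=|{\rm Filt}(X)|+|{\rm Filt}(Y)|-1$ and dually for ideals.

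For $\kappa\geq 3$ in the allowed range, I would take $L={\cal L}_2$, which is simple and hence $0$--regular, together with $K=L_{\nu,\mu,\kappa^{\prime }}$ from Theorem \ref{mainth}, where $\kappa^{\prime }=\kappa-1$ when $\kappa$ is finite (so that $\kappa^{\prime }\geq 2$ still lies in the admissible range) and $\kappa^{\prime }=\kappa$ in the infinite cases. Using the identity $\kappa+1=\kappa$ for infinite $\kappa$, the formula above yields $|{\rm Con}_{\BZ}(A_{\nu,\mu,\kappa})|=\kappa$, while the filter/ideal formulas give $|{\rm Filt}(A_{\nu,\mu,\kappa})|=|{\rm Id}(A_{\nu,\mu,\kappa})|=\mu+2=\mu$ since $\mu$ is infinite. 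Whenever $\kappa\leq\nu$, Theorem \ref{mainth} additionally asserts ${\rm Con}_{\I}(K)={\rm Con}(K)$; combined with the $\oplus$--decomposition $\theta=\Delta_{{\cal L}_2}\oplus(\theta\cap K^2)\oplus\Delta_{{\cal L}_2}$ forced on every $\theta\in{\rm Con}_{01}(A_{\nu,\mu,\kappa})$ by the $0$--regularity of ${\cal L}_2$, this makes every $01$--lattice congruence involution--preserving, so that ${\rm Con}_{01}(A_{\nu,\mu,\kappa})={\rm Con}_{\I 0}(A_{\nu,\mu,\kappa})$ and the claimed ${\rm Con}_{\BZ}(A_{\nu,\mu,\kappa})={\rm Con}_{01}(A_{\nu,\mu,\kappa})\cup\{\nabla_{A_{\nu,\mu,\kappa}}\}$ follows.

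For the remaining case $\kappa=2$, I would take $A_{\nu,\mu,2}=L\oplus L^d$ directly (the subcase with $K$ trivial), choosing $L={\cal M}_{\nu}$ when $\mu=\nu$ and $L=M_{\nu,2^{\nu}}$ when $\mu=2^{\nu}$; ${\cal M}_{\nu}$ is simple, and $M_{\nu,2^{\nu}}$ is $0$--regular by the proof of Theorem \ref{mainth}, so in both subcases $A_{\nu,\mu,2}$ is a simple antiortholattice with $|{\rm Filt}(A_{\nu,\mu,2})|=|{\rm Id}(A_{\nu,\mu,2})|=|{\rm Filt}(L)|+|{\rm Id}(L)|-1=\mu$, and the auxiliary conclusion is immediate since ${\rm Con}_{01}(L\oplus L^d)=\{\Delta_{L\oplus L^d}\}$ by $0$--regularity.

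The construction is essentially an assembly of existing results, so no new idea is needed beyond identifying the correct building blocks. The only point requiring genuine care is the reduction of ${\rm Con}_{01}$ to ${\rm Con}_{\I 0}$ for the auxiliary conclusion in the range $\kappa\leq\nu$, which crucially relies on the ${\rm Con}_{\I}(K)={\rm Con}(K)$ clause of Theorem \ref{mainth}; the remaining verifications, in particular the filter and ideal computations, are routine applications of the ordinal-sum formulas.
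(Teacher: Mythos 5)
Your proposal is correct and follows essentially the same route as the paper's own proof: the simple antiortholattices ${\cal M}_{\nu }\oplus {\cal M}_{\nu }^d$ and $M_{\nu ,2^{\nu }}\oplus M_{\nu ,2^{\nu }}^d$ for $\kappa =2$, and ${\cal L}_2\oplus L_{\nu ,\mu ,\kappa -1}\oplus {\cal L}_2$ (with the convention $\kappa -1=\kappa $ for infinite $\kappa $) otherwise, with the auxiliary conclusion for $\kappa \leq \nu $ derived from ${\rm Con}_{\I }(L_{\nu ,\mu ,\kappa -1})={\rm Con}(L_{\nu ,\mu ,\kappa -1})$ exactly as in the paper.
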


\begin{proof} Let us consider the $0$--regular bounded lattice with $\nu $ elements and as many ideals as subsets $M_{\nu ,2^{\nu }}$ from the proof of Theorem \ref{mainth}. Then the antiortholattice $A_{\nu ,2^{\nu },2}=M_{\nu ,2^{\nu }}\oplus M_{\nu ,2^{\nu }}^d$ has $|A_{\nu ,2^{\nu },2}|=\nu $, $|{\rm Filt}(A_{\nu ,2^{\nu },2})|=|{\rm Id}(A_{\nu ,2^{\nu },2})|=2^{\nu }$ and $|{\rm Con}_{\BZ }(A_{\nu ,2^{\nu },2})|=2$.

Now we consider the simple $0$--regular bounded lattice ${\cal M}_{\nu }$ with $\nu $ elements and finite length, thus all filters and ideals principal, and we let $A_{\nu ,\nu ,2}={\cal M}_{\nu }\oplus {\cal M}_{\nu }^d$. Then the antiortholattice $A_{\nu ,\nu ,2}$ has $|A_{\nu ,\nu ,2}|=\nu $, $|{\rm Filt}(A_{\nu ,\nu ,2})|=|{\rm Id}(A_{\nu ,\nu ,2})|=\nu $ and $|{\rm Con}_{\BZ }(A_{\nu ,\nu ,2})|=2$.

Now let $\kappa $ be a cardinal number with $3\leq \kappa \leq \nu $ or $\kappa =2^{\nu }$ let $\mu \in \{\nu ,2^{\nu }\}$, and consider the antiortholattice $A_{\nu ,\mu ,\kappa }={\cal L}_2\oplus L_{\nu ,\mu ,\kappa -1}\oplus {\cal L}_2$, where $L_{\nu ,\mu ,\kappa -1}$ is a pseudo--Kleene algebra as in Theorem \ref{mainth}, that is with $\nu $ elements, $\mu $ filters and ideals, $\kappa -1$ congruences and, since $\kappa \leq \nu $ and thus $\kappa -1\leq \nu $, with all lattice congruences preserving its involution. Then $|A_{\nu ,\mu ,\kappa }|=\nu $, $|{\rm Filt}(A_{\nu ,\mu ,\kappa })|=|{\rm Id}(A_{\nu ,\mu ,\kappa })|=\mu $, $|{\rm Con}_{\BZ }(A_{\nu ,\mu ,\kappa })|=\kappa -1+1=\kappa $ and, if $\kappa \leq \nu $, then ${\rm Con}_{\BZ }(A_{\nu ,\mu ,\kappa })={\rm Con}_{\I 0}(A_{\nu ,\mu ,\kappa })\cup \{\nabla _{A_{\nu ,\mu ,\kappa }}\}=\{eq(L_{\nu ,\mu ,\kappa -1}/\beta \cup \{\{0\},\{1\}\})\ |\ \beta \in {\rm Con}_{\I }(L_{\nu ,\mu ,\kappa -1})\}\cup \{\nabla _{A_{\nu ,\mu ,\kappa }}\}=\{eq(L_{\nu ,\mu ,\kappa -1}/\beta \cup \{\{0\},\{1\}\})\ |\ \beta \in {\rm Con}(L_{\nu ,\mu ,\kappa -1})\}\cup \{\nabla _{A_{\nu ,\mu ,\kappa }}\}={\rm Con}_{01}(A_{\nu ,\mu ,\kappa })\cup \{\nabla _{A_{\nu ,\mu ,\kappa }}\}$.\end{proof}

\begin{corollary} Under the Generalized Continuum Hypothesis:\begin{itemize}
\item an infinite (bounded) lattice with any numbers of filters and ideals can have any number of congruences between $2$ and its number of subsets;
\item a pseudo--Kleene algebra with any number of ideals can have any number of congruences between $2$ and its number of subsets and, simultaneously, when it has strictly less congruences than subsets, its congruences coinciding to those of its lattice reduct;
\item an antiortholattice with any number of ideals can have any number of congruences between $2$ and its number of subsets and, simultaneously, when it has strictly less congruences than subsets, its proper congruences coinciding to the congruences of its lattice reduct that have singleton classes of its lattice bounds.\end{itemize}

Under the Continuum Hypothesis, the above hold for countable bounded lattices, pseudo--Kleene algebras, respectively antiortholattices.\end{corollary}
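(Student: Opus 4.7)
The plan is to derive the three bullets as a packaging consequence of Theorem~\ref{gccmth}, Theorem~\ref{mainth} and the corollary immediately preceding this one, once the cardinal arithmetic is simplified by GCH. Indeed, under GCH, for every infinite cardinal $\nu$ one has $2^{\nu}=\nu^{+}$, so any cardinal $\kappa$ with $2\leq \kappa \leq 2^{\nu}$ satisfies either $2\leq \kappa \leq \nu $ or $\kappa =2^{\nu }$; these are precisely the two ranges handled by the cited constructions. Similarly, the estimate $|L|\leq |{\rm Filt}(L)|,|{\rm Id}(L)|\leq 2^{|L|}$ recalled at the start of Section~4 collapses under GCH to $|{\rm Filt}(L)|,|{\rm Id}(L)|\in \{|L|,2^{|L|}\}$, so the phrase ``any numbers of filters and ideals'' has only finitely many meanings to verify.

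For the first bullet on infinite (bounded) lattices I would exhibit, for fixed $\nu =|L|$ infinite and fixed $\kappa $ as above, one witness per admissible pair $(|{\rm Filt}|,|{\rm Id}|)\in \{\nu ,2^{\nu }\}^{2}$. Theorem~\ref{gccmth} provides $M_{\nu ,\kappa }$ with filter count $\nu $ and ideal count $2^{\nu }$, and by duality $M_{\nu ,\kappa }^{d}$ realizes the swap; the two symmetric pairs are realized by the bounded lattices $L_{\nu ,\mu ,\kappa }$ of Theorem~\ref{mainth} with $\mu =\nu $ and $\mu =2^{\nu }$, respectively. In each case the number of congruences equals $\kappa $ by construction.

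The second bullet is immediate from Theorem~\ref{mainth}: the same $L_{\nu ,\mu ,\kappa }$ may be chosen to be a pseudo--Kleene algebra, and the coincidence ${\rm Con}_{\I }(L_{\nu ,\mu ,\kappa })={\rm Con}(L_{\nu ,\mu ,\kappa })$ is asserted there precisely for $\kappa \leq \nu $, that is, for $\kappa <2^{\nu }$ under GCH. The third bullet is read off the preceding corollary, whose antiortholattices $A_{\nu ,\mu ,\kappa }$ carry $|{\rm Filt}|=|{\rm Id}|=\mu \in \{\nu ,2^{\nu }\}$ and, when $\kappa \leq \nu $, satisfy ${\rm Con}_{\BZ }(A_{\nu ,\mu ,\kappa })={\rm Con}_{01}(A_{\nu ,\mu ,\kappa })\cup \{\nabla _{A_{\nu ,\mu ,\kappa }}\}$, which is exactly the characterization of proper congruences demanded in the statement.

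Finally, the Continuum Hypothesis clause is just the specialization $\nu =\aleph _{0}$: every countable infinite bounded (involution) lattice has cardinality $\aleph _{0}$ and $2^{\aleph _{0}}=\aleph _{1}$ under CH, so the dichotomy $\kappa \leq \aleph _{0}$ or $\kappa =2^{\aleph _{0}}$ again exhausts the admissible values and the same witnesses apply verbatim. I expect no genuine obstacle: all the substantive work has been done in Theorem~\ref{gccmth}, Theorem~\ref{mainth} and the preceding corollary, and what remains is the tidy bookkeeping of the four filter/ideal combinations together with the invocation of GCH (resp.\ CH) to certify that no other cardinal values of $\kappa $, $|{\rm Filt}|$ or $|{\rm Id}|$ need to be considered.
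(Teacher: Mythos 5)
Your proposal is correct and follows exactly the route the paper intends: the paper states this corollary without a separate proof, treating it as the immediate assembly of Theorem \ref{gccmth} (with its dual), Theorem \ref{mainth} and the antiortholattice corollary, with GCH used precisely as you use it to reduce the admissible values of $\kappa$ and of the filter/ideal cardinalities to the cases those results cover. Your bookkeeping of the four filter/ideal combinations and the specialization $\nu=\aleph_0$ for the CH clause is the same argument, just written out.
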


\section*{Acknowledgements}

I thank G\' abor Cz\' edli for insightful discussions on some of the topics of this paper.

This work was supported by the research grant {\em Propriet\`a d`Ordine Nella Semantica Algebrica delle Logiche Non--classiche} of Universit\`a degli Studi di Cagliari, Regione Autonoma della Sardegna, L. R. $7/2007$, n. $7$, $2015$, CUP: ${\rm F}72{\rm F}16002920002$.

\end{document}